\documentclass[reqno,11pt]{amsart}        
\usepackage{amsfonts,amsmath,amssymb,amsthm,colordvi,mathrsfs,graphicx}
\usepackage{caption,subcaption,float}
\usepackage[utf8]{inputenc}
\usepackage{mathrsfs}
\usepackage{geometry}
\newtheorem{conjecture}{Conjecture}

\usepackage{enumerate}

\usepackage{amsmath, amssymb, amsthm, geometry, hyperref}

\usepackage{tabularx}
\usepackage{tabulary}
 
\hypersetup{
  colorlinks=true,
  linkcolor=blue,
  citecolor=blue,
  urlcolor=blue
}
\usepackage[all,knot,poly]{xy}
\usepackage{tikz-cd}
\usepackage{tikz}
\usepackage{verbatim}
\usetikzlibrary{arrows}
\usetikzlibrary{knots}
\tikzset{every path/.style={line width=0.4pt},every node/.style={transform shape,knot crossing,inner sep=1.5pt},>=triangle 60,text node/.style={rectangle,transform shape=false,black}}

 \usetikzlibrary{decorations.markings, arrows.meta}

\theoremstyle{plain}      
\newtheorem{thm}{Theorem}[section]     
\newtheorem{theorem}[thm]{\bf Theorem}     
\newtheorem{corollary}[thm]{\bf Corollary}

\theoremstyle{remark}

\newtheorem{remark}[thm]{Remark} 
\newtheorem{claim}[thm]{Claim}     
\theoremstyle{definition}      
\newtheorem{definition}[thm]{Definition}     

\newcommand{\di}{\displaystyle}

\newcommand{\Vol}{\mathop{\rm Vol}\nolimits}

\newcommand{\Log}{\mathop{\rm Log}\nolimits}

\subjclass[2020]{14T05, 14M25, 14P25, 32A60, 52B20}
\keywords{Amoeba, contour of amoeba, logarithmic Gauss map, Newton polytope, mixed volume, real algebraic geometry, B\'ezout theorem, Bernstein theorem.
 }

\begin{document}

\title{TBA}

\author{Mounir Nisse}

\address{Mounir Nisse\\
Department of Mathematics, Xiamen University Malaysia, Jalan Sunsuria, Bandar Sunsuria, 43900, Sepang, Selangor, Malaysia.
}
\email{mounir.nisse@gmail.com, mounir.nisse@xmu.edu.my}
\thanks{}
\thanks{This research is supported in part by Xiamen University Malaysia Research Fund (Grant no. XMUMRF/ 2020-C5/IMAT/0013).}

 \title{Degree Bounds for Amoeba Contours}

\maketitle

 \begin{abstract}
We investigate the real algebraic complexity of contours of amoebas associated with algebraic hypersurfaces and complete intersections in complex algebraic tori. Motivated by the foundational estimates of Lang--Shapiro--Shustin \cite{LSS}, we develop a toric and logarithmic approach relating contour geometry to logarithmic Gauss maps, mixed volumes, and Newton polytope geometry. We prove that the logarithmic criticality equations admit substantially sharper asymptotic behavior than the classical Pfaffian bounds, reducing the expected growth from order $d^{2n}$ to order $d^n$. We further formulate a conjectural asymptotic theory for complete intersections based on logarithmic Grassmannian geometry and Schubert degeneracy loci. The work establishes new connections between amoeba theory, real algebraic geometry, tropical geometry, and logarithmic toric geometry.
\end{abstract}

\section*{Introduction}

The theory of amoebas has become one of the central themes connecting algebraic geometry, tropical geometry, convex geometry, and real algebraic geometry. Since the pioneering work of Gelfand, Kapranov, and Zelevinsky \cite{GKZ94}, and the later foundational developments of Passare and Rullg{\aa}rd \cite{PR04}, Forsberg--Passare--Tsikh \cite{FPT00}, Mikhalkin \cite{Mikhalkin2004}, and Maclagan and Sturmfels \cite{MaclaganSturmfels}, amoebas have emerged as fundamental logarithmic shadows of algebraic varieties inside complex algebraic tori. Their geometry encodes subtle interactions between algebraic equations, Newton polytopes, tropical degenerations, logarithmic structures, and convex geometry.

The interaction between amoebas and tropical geometry has become especially important following the development of tropical methods in algebraic geometry and nonarchimedean geometry \cite{Kapranov,Mikhalkin2004}. In particular, tropical varieties arise as asymptotic degenerations of amoebas, while Newton polytopes govern both the tropical and logarithmic behavior of algebraic hypersurfaces. This deep relation has transformed amoeba theory into one of the main geometric interfaces between algebraic geometry and tropical geometry.

Given an algebraic hypersurface
\(
H=\{P=0\}\subset (\mathbb{C}^*)^n,
\)
defined by a Laurent polynomial
\(\di
P(z)=\sum_{\alpha\in A} c_\alpha z^\alpha,
\)
its amoeba is the image
\(
\mathcal{A}_H=\operatorname{Log}(H)\subset \mathbb{R}^n,
\)
under the logarithmic map
\[
\operatorname{Log}(z_1,\dots,z_n)=(\log|z_1|,\dots,\log|z_n|).
\]
The geometry of the amoeba is strongly governed by the Newton polytope $\Delta(P)$ and by the tropicalization of the defining equations \cite{GKZ94,PR04,MaclaganSturmfels}. Many global properties of amoebas are already visible at the tropical level, including asymptotic directions, complement components, and Ronkin convexity phenomena.

Among the most delicate geometric objects associated with an amoeba is its contour, namely the set of critical values of the logarithmic map restricted to the hypersurface:
\(
\mathcal{C}\mathcal{A}_H=\mathrm{CritVal}(\operatorname{Log}|_H).
\)
The contour detects the singular behavior of the logarithmic projection and governs the transition between regular and nonregular fibers of the amoeba. It is therefore closely related to logarithmic discriminants, logarithmic Gauss maps, and real critical loci.

The logarithmic Gauss map was studied extensively by Kapranov \cite{Kapranov}, Mikhalkin \cite{Mikhalkin2004}, and later by  Madani and Nisse \cite{MadaniNisse2014}, who investigated logarithmic critical loci and the geometry of generalized logarithmic Gauss maps for algebraic varieties inside complex tori. Their work clarified the relation between logarithmic criticality and real projective incidence geometry and emphasized the role of logarithmic degeneracy phenomena in amoeba theory.


A major advance in the quantitative study of amoeba contours was achieved by Lang, Shapiro, and Shustin \cite{LSS}, who established explicit upper bounds for the real degree of amoeba contours using techniques from Pfaffian geometry, elimination theory, and real algebraic geometry. Their work showed that the contour can be treated as a semi-algebraic set defined by polynomial degeneracy equations, leading to universal estimates of asymptotic order $O(d^{2n})$ for hypersurfaces of degree $d$. Their method is remarkably general and relies on Khovanskii-type estimates \cite{Khovanskii} together with elimination procedures for logarithmic criticality equations.


The work of Lang--Shapiro--Shustin \cite{LSS} naturally connects amoeba theory with real algebraic geometry because the contour itself is fundamentally a real semi-algebraic object. Indeed, the contour is defined by real degeneracy conditions on logarithmic derivatives and therefore inherits natural semi-algebraic stratifications. Questions concerning singularities, connected components, intersection degrees, and critical loci become problems in real algebraic geometry. From this viewpoint, amoeba contours may be regarded as logarithmic analogues of classical discriminantal varieties and caustics.


The relation between amoebas and real algebraic geometry has also been emphasized in the work of Mikhalkin \cite{Mikhalkin2004} and Viro \cite{Viro}, particularly through tropical patchworking and real tropical geometry. The contour appears naturally as a real logarithmic degeneracy locus whose asymptotic structure should interact with tropical phase geometry, signed tropical varieties, and tropical discriminants. This perspective suggests that the asymptotic geometry of contours may admit tropical models governed by balancing conditions and polyhedral incidence structures.

The purpose of the present work is to show that the logarithmic and toric structure of Laurent polynomials leads to significantly sharper asymptotic bounds than those predicted by general Pfaffian complexity theory. The key observation is that logarithmic derivatives preserve the Newton polytope structure:
\(
z_j\frac{\partial P}{\partial z_j}
=
\sum_{\alpha\in A}\alpha_j c_\alpha z^\alpha.
\)
Consequently, the contour equations possess a sparse toric structure invisible from the viewpoint of ordinary projective degree theory. This suggests that the natural complexity of the contour should be governed by Newton polytopes and mixed volumes rather than by total degree alone.
The logarithmic Gauss map plays a fundamental role throughout this work:
\[
\gamma_H(z)=
[z_1P_{z_1}(z):\cdots:z_nP_{z_n}(z)].
\]
A classical theorem states that a point $z\in H$ is critical for the logarithmic map if and only if the logarithmic Gauss image $\gamma_H(z)$ belongs to the real projective space $\mathbb{RP}^{n-1}$ \cite{Kapranov,MadaniNisse2014}. This transforms the contour problem into a real incidence problem inside logarithmic projective geometry and reveals that the contour is naturally governed by real logarithmic degeneracy conditions.
 
The problems studied in this paper were strongly motivated by several questions and asymptotic complexity issues raised in the work of Lang--Shapiro--Shustin \cite{LSS}, particularly concerning the real degree of amoeba contours and the possibility of obtaining sharper bounds.
 Our approach combines logarithmic geometry, toric geometry, mixed-volume theory, and real algebraic geometry. First, we derive explicit logarithmic criticality equations describing the contour as a semi-algebraic image of algebraic degeneracy varieties. Second, we prove universal B\'ezout-type estimates of the form
\[
\mathbb{R}\deg(\mathcal{C}\mathcal{A}_H)\le d(2d-1)^{n-1},
\]
which already improve drastically the asymptotic order appearing in the estimate of Lang--Shapiro--Shustin \cite{LSS}. Third, we refine these bounds using Bernstein--Kushnirenko--Khovanskii theory and logarithmic Newton polytope methods. Under suitable nondegeneracy assumptions, the contour degree is controlled by the degree of the logarithmic Gauss map:
\[
\mathbb{R}\deg(\mathcal{C}\mathcal{A}_H)\le n!\operatorname{Vol}(\Delta(P)).
\]
The asymptotic comparison with Lang--Shapiro--Shustin \cite{LSS} is particularly striking. Their estimate has asymptotic order
\(
O(d^{2n}),
\)
while our logarithmic-toric approach predicts asymptotic order
\(
O(d^n).
\)
The difference reflects two fundamentally different geometric philosophies. The Lang--Shapiro--Shustin method treats the contour as a general real algebraic degeneracy set and therefore measures complexity through Pfaffian elimination. In contrast, our approach exploits the sparse toric structure preserved by logarithmic differentiation. The resulting reduction from exponent $2n$ to exponent $n$ strongly suggests that the intrinsic geometry of the contour is fundamentally toric rather than projective.


Another major theme of this work is the extension of amoeba contour theory to complete intersections. In higher codimension, the logarithmic geometry naturally leads to generalized logarithmic Gauss maps taking values in Grassmannians:
\[
\gamma_G:V\dashrightarrow G(r,n).
\]
The contour then corresponds to Schubert-type degeneracy loci inside Grassmannians. This introduces logarithmic Pl\"ucker coordinates, Schubert geometry, and Grassmannian incidence theory into amoeba geometry.
%
The interaction with tropical geometry is equally important. Amoebas are logarithmic approximations of tropical varieties, and the Newton polytope structure of the contour equations strongly suggests that contour geometry should admit tropical asymptotic models. The mixed-volume bounds appearing in this work are closely related to tropical intersection multiplicities and Bernstein theory. This indicates that the contour may possess a tropical shadow governed by tropical discriminants and logarithmic balancing conditions.


The relation with real tropical geometry is especially intriguing. Since the contour is defined by real logarithmic degeneracy conditions, one expects its asymptotic geometry to interact with real phase structures, signed tropical varieties, and patchworking techniques \cite{Viro}. In particular, the real locus of the logarithmic Gauss map suggests deep connections with Viro theory, tropical real enumerative geometry, and logarithmic degeneracy geometry. The contour therefore appears as a bridge between logarithmic algebraic geometry, real algebraic geometry, and tropical geometry.

\noindent {\bf Related Work.}
The study of amoebas originated in the work of Gelfand, Kapranov, and Zelevinsky on discriminants and Newton polytopes. Their theory revealed deep interactions between algebraic geometry and convex geometry and introduced many of the polyhedral methods that later became central in tropical geometry. The geometric theory of amoebas was subsequently developed by Passare and Rullg{\aa}rd, who established fundamental properties of amoeba complements, Ronkin functions, and Newton polytope duality.

Mikhalkin introduced tropical-geometric techniques into the study of amoebas and demonstrated profound relations between amoebas, tropical curves, and real algebraic geometry. Forsberg, Passare, and Tsikh investigated complement components and convexity properties of amoebas, while Sturmfels and collaborators developed computational and tropical aspects related to discriminants and sparse elimination.
The logarithmic Gauss map was studied extensively by Kapranov and later by Mikhalkin and Rullg{\aa}rd in connection with critical loci of the logarithmic map. The relation between critical points and the real projective locus of the logarithmic Gauss map is now understood as one of the fundamental structures governing amoeba geometry.

The strongest previous quantitative bounds for contour complexity were obtained by Lang, Shapiro, and Shustin. Their work introduced Pfaffian and elimination-theoretic methods into the study of amoeba contours and produced explicit estimates for the real degree of the contour. Their approach belongs fundamentally to real algebraic geometry and semi-algebraic complexity theory. The present work differs substantially in philosophy because it emphasizes toric sparsity and logarithmic Newton polytope geometry rather than general elimination complexity.
Bernstein--Kushnirenko--Khovanskii theory plays a central role in our approach. The use of mixed volumes to estimate logarithmic degeneracy loci connects the contour problem with sparse elimination theory and toric intersection theory. The appearance of Grassmannian logarithmic Gauss maps in higher codimension further links the problem to Schubert geometry and degeneracy loci theory.
The asymptotic philosophy proposed here is also related to tropical compactifications, logarithmic geometry, and nonarchimedean amoeba theory. Recent developments in tropical and logarithmic geometry strongly suggest that contour geometry should admit a polyhedral asymptotic model controlled by Newton polytopes and tropical incidence structures.

\noindent{\bf Sharper Novelty Claims.}
The main novelty of this work lies in the introduction of a logarithmic-toric framework for the study of amoeba contours. Unlike previous approaches based on general Pfaffian elimination, the present work exploits the fact that logarithmic differentiation preserves Newton polytope supports. This allows one to replace general projective degree estimates by toric mixed-volume estimates.
A second novelty is the interpretation of the contour as a real logarithmic degeneracy locus governed by the logarithmic Gauss map. This transforms the contour problem into a real incidence problem inside projective and Grassmannian geometry.
A third novelty is the asymptotic reduction of contour complexity from order $d^{2n}$ to order $d^n$. This suggests that the intrinsic geometry of amoeba contours is fundamentally controlled by toric sparsity rather than by general elimination complexity.
Another new aspect is the introduction of generalized logarithmic Gauss maps for complete intersections together with Schubert-type contour degeneracy conditions. This appears to be the first systematic attempt to formulate an asymptotic Grassmannian theory for contours of complete intersections.
Finally, the work develops new conceptual links between amoeba theory, real algebraic geometry, tropical geometry, mixed volumes, Schubert geometry, and logarithmic degeneracy theory.
 
\noindent {\bf Acknowledgements.} The  author was partially supported by Xiamen University Malaysia Research Fund (Grant no. XMUMRF/ 2020-C5/IMAT/0013).
The author would like to thank Boris Shapiro for kindly sending the paper with Lang and Shustin, which provided important motivation and inspiration for several of the questions investigated in this work.


\section{Preliminaries}

Let
$
H\subset(\mathbb C^\ast)^n
$
be a hypersurface defined by a Laurent polynomial
\(\di
P(z)=\sum_{\alpha\in M} c_\alpha z^\alpha,
\)
with Newton polytope
$
\Delta(P).
$
Denote by
$
d=\deg(P)
$
the total degree after homogenization.
The logarithmic map is
\(
\Log(z_1,\dots,z_n)
=
(\log|z_1|,\dots,\log|z_n|),
\)
and the contour of the amoeba of
$
H
$
is the set
$
\mathcal C\mathscr A_H
$
of critical values of
$
\Log_{|H}.
$

\begin{definition}
The real degree of the contour is
\[
\mathbb R\deg(\mathcal C\mathscr A_H)
=
\sup_L
\#
(L\cap\mathcal C\mathscr A_H),
\]
where
$
L\subset\mathbb R^n
$
ranges over affine lines intersecting the contour transversally.
\end{definition}

Lang--Shapiro--Shustin proved the explicit estimate
\[
\mathbb R\deg(\mathcal C\mathscr A_H)
\le
2^{2n+(n-1)(n-2)/2}
d^{n+1}
\bigl(4dn+2(n-1)^2-1\bigr)^{n-1}.
\]

Asymptotically, this bound has order
\(
O(d^{2n}).
\)
The factor
$
d^{n+1}
$
comes from the defining equations, while the additional factor
$
d^{n-1}
$
arises from the Pfaffian complexity appearing in Khovanskii's method.
The proof uses only general polynomial degree estimates and does not exploit the toric structure of Laurent polynomials. In logarithmic coordinates,
\(\di
P(z)
=
\sum_\alpha
c_\alpha
e^{\langle\alpha,x\rangle}
e^{i\langle\alpha,\theta\rangle},
\)
so the geometry is controlled by the Newton polytope.
This suggests that sharper bounds should follow from toric and mixed-volume methods. In particular, Bernstein--Kushnirenko--Khovanskii theory predicts bounds of order
\[
n!\,\Vol(\Delta(P))
\le
C_n d^n,
\]
which are significantly smaller than the general Pfaffian estimate.
Thus the main problem is to reduce the exponent from
$
2n
$
to
$
n+1
$
or even
$
n
$
by exploiting the special logarithmic and toric structure of the contour equations.

\subsection*{The Lang--Shapiro--Shustin estimate}

\begin{theorem}[Lang--Shapiro--Shustin]
Let $P$ be a polynomial of degree $d$ defining a hypersurface
$
H\subset (\mathbb C^\ast)^n,
$
and let
$
\mathcal C\mathscr A_H
$
be the contour of the amoeba of $H$. Then
\[
\mathbb R\deg(\mathcal C\mathscr A_H)
\le
2^{2n+(n-1)(n-2)/2}
d^{n+1}
\bigl(4dn+2(n-1)^2-1\bigr)^{n-1}.
\]
This estimate is obtained by replacing the contour equations by general real polynomial equations in $2n$ variables and applying elimination and B\'ezout-type arguments. In the Laurent case, the estimate is not sharp because the toric structure of the equations is not used.
\end{theorem}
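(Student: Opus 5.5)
We would reproduce the Lang--Shapiro--Shustin argument by writing the contour as the projection of a real algebraic set in $2n$ real variables and bounding intersections with a line through a B\'ezout estimate combined with Khovanskii's fewnomial/Pfaffian bound.

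\textbf{Step 1: contour equations.} Using the criterion recalled in the introduction, $z\in H$ is a critical point of $\Log|_H$ if and only if $\gamma_H(z)\in\mathbb{RP}^{n-1}$. Equivalently, the vectors $(z_jP_{z_j})_j$ and $(\overline{z_jP_{z_j}})_j$ are proportional, which amounts to the vanishing of the imaginary parts of the $2\times2$ minors
\[
\operatorname{Im}\bigl(z_jP_{z_j}\,\overline{z_kP_{z_k}}\bigr)=0 .
\]
We write $z_j=x_j+iy_j$, or better $z_j=e^{u_j+i\theta_j}$. Together with $\operatorname{Re}P=\operatorname{Im}P=0$, this gives a real system in $2n$ unknowns. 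Each equation has degree at most $d$ (for $P$) or $2d$ (for the minors). Generically the solution set of $\operatorname{Re}P=\operatorname{Im}P=0$ plus $n-1$ independent minor conditions is an $(n-1)$-dimensional real set, and its image under $\Log$ is $\mathcal C\mathscr A_H$.

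\textbf{Step 2: reduction to counting points.} Fix a transversal affine line $L=\{a+tb\}$. The condition $\Log z\in L$ imposes $n-1$ linear conditions on $u=\Log|z|$. In the $(x,y)$ variables these become $n-1$ equations of the form $|z_j|^{2b_k}=c\,|z_k|^{2b_j}$, which are not polynomial in general. This is where the Pfaffian viewpoint enters: we treat $u_j=\log|z_j|$ as Pfaffian functions of the variables. Counting $L\cap\mathcal C\mathscr A_H$ then becomes counting the nondegenerate solutions of a system of $2n$ equations in $2n$ unknowns. The $n+1$ equations $\operatorname{Re}P$, $\operatorname{Im}P$ and the $n-1$ minors are polynomial, while the $n-1$ line conditions are Pfaffian of order at most $n$ and degree $1$. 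Some care is needed to perturb the system so that all solutions are nondegenerate and the count remains an upper bound, in the spirit of a Sard-type argument.

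\textbf{Step 3: Khovanskii's theorem.} For a system of polynomials of degrees $d_i$ in $N=2n$ variables, together with a Pfaffian chain of length $r$, Khovanskii's bound is
\[
2^{r(r-1)/2}\prod d_i\,\bigl(\textstyle\sum d_i-N+1+\dots\bigr)^{r}.
\]
Applying it with $r\approx n-1$ yields the factor $2^{(n-1)(n-2)/2}$. The product of degrees contributes the factor $d^{n+1}$, up to powers of $2$ that combine into $2^{2n}$. The sum term contributes $(4dn+2(n-1)^2-1)^{n-1}$.

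The main obstacle is the bookkeeping in Step 3, namely choosing the Pfaffian chain and the degree data so that the constants match exactly. We would first try the chain $f_j=\log(x_j^2+y_j^2)$ with $df_j=2(x_jdx_j+y_jdy_j)/(x_j^2+y_j^2)$. Clearing the denominators should give the quadratic terms $2(n-1)^2$. The secondary obstacle is justifying that the real degree, defined as a supremum over transversal lines, is attained by nondegenerate intersection counts, so that Khovanskii's bound applies.
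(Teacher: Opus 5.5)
Your outline follows the same route as the paper's proof. The paper also passes to $2n$ real variables, pulls back a transversal line under $\Log$, and attributes the constant to Khovanskii-type elimination without carrying out the count. You are right, where the paper's sketch glosses over it, that $\Log^{-1}(L)$ is not algebraic for irrational directions, so a Pfaffian bound rather than a B\'ezout bound is needed. But as a derivation of the displayed bound, your Step~3 fails as set up.

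First, $f_j=\log(x_j^2+y_j^2)$ is not a Pfaffian chain in Khovanskii's sense. The differential $df_j$ must be polynomial in $(x,y,f)$, and denominators cannot be cleared inside a chain. You must either append $1/(x_j^2+y_j^2)$ to the chain, which lengthens it, or use Khovanskii's theorem for separating solutions of polynomial Pfaffian equations such as $(x_j^2+y_j^2)\,df_j=2(x_j\,dx_j+y_j\,dy_j)$.

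Second, and decisively, the exponent in Khovanskii's bound is the length of the chain. A chain containing all $n$ functions $\log|z_j|$ (your \emph{order at most $n$}) yields at best $2^{n(n-1)/2}$ times the $n$-th power of a bracket linear in $d$. That is a bound of order $d^{n+1}\cdot d^{n}=d^{2n+1}$, weaker than the statement. To get the exponent $n-1$ and the factor $2^{(n-1)(n-2)/2}$, you must encode $\Log^{-1}(L)$ with only $n-1$ transcendental functions. For example, write $L$ as $\log|z_j|=\lambda_j\log|z_1|+\mu_j$ for $j\ge 2$. Then only $g_j=(x_1^2+y_1^2)^{\lambda_j}$ enter, satisfying $(x_1^2+y_1^2)\,dg_j=\lambda_j g_j\,d(x_1^2+y_1^2)$. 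Writing \emph{$r\approx n-1$} skips exactly this step.

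Third, your degree data cannot produce the constant. Equations of degrees $d,d,2d,\dots,2d$, plus $n-1$ line conditions of degree one, give $\prod\beta_i=2^{n-1}d^{n+1}$. By contrast, $2^{2n}d^{n+1}=(2d)^2(4d)^{n-1}$, and the term $4dn=2\cdot 2d+(n-1)\cdot 4d$ in the bracket suggests a different formulation. In it, $\operatorname{Re}P$ and $\operatorname{Im}P$ have degree $2d$ and the criticality equations degree $4d$ in the combined polynomial--Pfaffian variables. This happens, for instance, when each $z_j$ is written as a transcendental radial factor times a polynomial angular factor. So your claims that the \emph{powers of $2$ combine into $2^{2n}$} and that the \emph{sum term contributes $(4dn+2(n-1)^2-1)^{n-1}$} are read off from the answer, not derived.

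Finally, Khovanskii's theorem counts only nondegenerate solutions. You must show that each transversal point of $L\cap\mathcal C\mathscr A_H$ survives a small perturbation as a nondegenerate lifted solution. This fails naively when $\Log$ restricted to the critical locus has positive-dimensional fibres. An example is a translated subtorus such as $z_1=cz_2$, where every point is critical and the fibres are circles.

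In fairness, the paper's own proof does none of this bookkeeping either; it effectively just cites Lang--Shapiro--Shustin. So neither text actually establishes the stated constant.
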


\begin{proof}
Write
$
z_j=x_j+i y_j.
$
The hypersurface $H$ is defined in $\mathbb R^{2n}$ by
\(
\operatorname{Re}(P(x,y))=0,
\, 
\operatorname{Im}(P(x,y))=0,
\)
both of degree at most $d$.
The contour is determined by the degeneracy of the logarithmic map. This condition is expressed by polynomial equations obtained from the logarithmic derivatives
$
z_j\frac{\partial P}{\partial z_j}.
$
After clearing denominators, one obtains a finite system of real polynomial equations of degree $O(d)$ in $2n$ variables.
To bound
\[
\mathbb R\deg(\mathcal C\mathscr A_H)
=
\sup_L \#(L\cap \mathcal C\mathscr A_H),
\]
one considers the preimage of a transversal affine line
$
L\subset\mathbb R^n
$
under the logarithmic map. Introducing auxiliary variables transforms the problem into counting solutions of a polynomial system in finitely many variables.
Applying elimination theory and B\'ezout-type estimates at successive projection steps produces the bound
\(
d^{n+1}
\bigl(4dn+2(n-1)^2-1\bigr)^{n-1},
\)
while the factor
$
2^{2n+(n-1)(n-2)/2}
$
comes from estimates for real algebraic systems.
The argument uses only polynomial degree bounds and ignores the sparse toric structure of the Laurent equations. In logarithmic coordinates,
\[
P(z)
=
\sum_\alpha
c_\alpha
e^{\langle\alpha,x\rangle}
e^{i\langle\alpha,\theta\rangle},
\]
so the geometry is controlled by the Newton polytope $\Delta$.
Using Bernstein--Kushnirenko--Khovanskii theory instead leads to estimates of order
\(
n!\,\operatorname{Vol}(\Delta)
\le
C_n d^n,
\)
which are significantly sharper.
Thus the Lang--Shapiro--Shustin estimate is a valid universal bound, while the sharper $d^n$ behavior reflects additional toric structure not used in their method.
\end{proof}

 
 
\section{Dimension and Structure of the Contour of an Amoeba} 
 
 Let $V \subset (\mathbb{C}^\ast)^n$ be a smooth algebraic variety of complex dimension $m$, and let
\(
\Log : (\mathbb{C}^\ast)^n \to \mathbb{R}^n
\)
be the logarithmic map. The contour $\mathcal{C}\mathscr{A}_V$ is the set of critical values of $\Log|_V$.

\begin{theorem}
The contour $\mathcal{C}\mathscr{A}_V$ is a semi-algebraic subset of $\mathbb{R}^n$ admitting a finite stratification into smooth real-analytic manifolds.
Its dimension satisfies
\[
\dim_{\mathbb{R}}(\mathcal{C}\mathscr{A}_V)
\le \min(n-1,\,2m-1).
\]
Moreover, for generic $V$, one has
\(
\dim_{\mathbb{R}}(\mathcal{C}\mathscr{A}_V)
= \min(n-1,\,2m-1).
\)
In particular, $\mathcal{C}\mathscr{A}_V$ is a hypersurface in $\mathbb{R}^n$ if and only if $2m \ge n$.
\end{theorem}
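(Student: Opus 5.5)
The plan is to fix conventions first and then argue in three layers: semi-algebraicity, the dimension bound via a rank version of Sard's theorem, and genericity via a local model. For the statement to be consistent when $2m<n$, I read ``critical'' as ``the differential of $\Log|_V$ has rank strictly smaller than its generic (maximal) rank $\min(2m,n)$''. With the naive convention ``rank $<n$'', every point is critical when $2m<n$, the contour equals the whole amoeba, and its dimension is $2m$ rather than $2m-1$. So I will state this convention explicitly at the start of the proof.

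\emph{Semi-algebraicity and stratification.} Write $z_j=e^{x_j+i\theta_j}$, or work in the real coordinates $z_j=a_j+ib_j$ of $\mathbb{R}^{2n}$. Then $V$ is a real algebraic subset of $\mathbb{R}^{2n}$, and $\Log$ is the composition of the semi-algebraic map $z\mapsto(|z_1|^2,\dots,|z_n|^2)$ with the analytic diffeomorphism $t\mapsto\tfrac12\log t$ of $\mathbb{R}_{>0}^n$. The rank condition on $d(\Log|_V)$ is the vanishing of all minors of a fixed size of a matrix with polynomial entries: the Jacobian of $|z|^2$ restricted to $T_zV$, expressed through a local frame given by the defining equations. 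So the critical set $\Sigma\subset V$ is real algebraic. Its image under $z\mapsto |z|^2$ is semi-algebraic by Tarski--Seidenberg. Transporting back by the coordinatewise logarithm gives a globally subanalytic set. If one insists on semi-algebraic in the strict sense, I would replace $\mathbb{R}^n$ by the coordinates $t_j=|z_j|^2$; the author's statement presumably intends this up to the $\log$ diffeomorphism. Whitney stratification of semi-algebraic or subanalytic sets (Łojasiewicz, Hironaka) then gives the finite stratification into smooth real-analytic manifolds.

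\emph{Upper bound.} On $\Sigma$ the rank of $d(\Log|_V)$ is at most $\min(2m,n)-1=\min(2m-1,n-1)$. Stratify $\Sigma$ so that the restriction of $\Log$ to each stratum $S$ has constant rank $r_S\le\min(2m-1,n-1)$. By the constant rank theorem, $\Log(S)$ is locally a union of images of $r_S$-dimensional immersed pieces. Hence $\dim\Log(S)\le r_S$, using the semi-algebraic Sard theorem, or equivalently the fact that dimension does not increase under semi-algebraic maps of bounded rank. Taking the union over the finitely many strata gives $\dim_{\mathbb{R}}\mathcal C\mathscr A_V\le\min(n-1,2m-1)$.

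\emph{Generic equality.} This is the main obstacle. Since the contour lies in the amoeba, it suffices to exhibit, for generic $V$, one critical point near which the critical locus maps onto a set of the full dimension $\min(n-1,2m-1)$. I would use Thom--Boardman transversality for the family of maps $\Log|_V$ as the coefficients of the defining equations vary, in the spirit of the parametric transversality theorem. For generic coefficients, the corank-one locus $\Sigma^{1}$ is a smooth submanifold of the expected codimension $(2m-k+1)(n-k+1)$ with $k=\min(2m,n)$. Moreover, $\Log|_{\Sigma^1}$ is generically a fold, hence an immersion, on an open dense part; this is the $\Sigma^{1,0}$ condition. In the hypersurface-type range $2m\ge n$ this gives $\dim\Sigma^1=n-1$ and an immersed image, so the image has dimension exactly $n-1$. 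When $2m<n$, the fold image has dimension $2m-(n-2m+1)+0$, which can be smaller than $2m-1$. So in that range I expect the claim to need either a different genericity notion or a correction. I would test it on curves in $(\mathbb{C}^\ast)^3$ before committing. Nonemptiness of $\Sigma^1$ for $2m\ge n$ follows from the logarithmic Gauss map criterion quoted in the introduction: $\gamma_V$ meets the real Grassmannian locus, as in the hypersurface case where $\gamma_H$ is surjective onto $\mathbb{CP}^{n-1}$ for nondegenerate $P$. The final equivalence ``hypersurface iff $2m\ge n$'' is then immediate from $\min(n-1,2m-1)=n-1\iff 2m\ge n$.
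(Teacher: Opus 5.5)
Your two reservations are not defects in your argument. They mark exactly where the statement, and the paper's proof of it, break down.

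\textbf{Where the statement and the paper's proof fail.} The paper follows the same outline as you. Its first step makes $\Log$ ``semi-algebraic'' by adjoining $r_i=e^{u_i}$, but that relation is not semi-algebraic, and the conclusion is false. For $H=\{1+z_1+z_2=0\}$ the critical points are the points of $H\cap(\mathbb R^\ast)^2$. The contour meets the quadrant $\{x_1<0<x_2\}$ exactly in the graph of $x_2=\log(1+e^{x_1})$. This graph is not semi-algebraic, since $\log(1+e^{x_1})$ tends to $0$ faster than any power of $|x_1|$ as $x_1\to-\infty$.

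The paper's last step asserts that the rank drop ``imposes one real equation'', so that the critical locus is a smooth $(2m-1)$-manifold on which $\Log$ has rank $\min(n-1,2m-1)$. Your count is the correct one. With $W=\{(v_1/z_1,\dots,v_n/z_n):v\in T_zV\}$ one has $\operatorname{rank} d(\Log|_V)_z=2m-\dim_{\mathbb R}(W\cap\mathbb R^n)$. The corank-one locus then has expected real codimension:
\begin{itemize}
\item $2m-n+1$ if $2m\ge n$, so dimension $n-1$, which is smaller than $2m-1$ when $2m>n$; this matches the $n-1$ equations the paper itself derives for hypersurfaces;
\item $n-2m+1$ if $2m<n$, so dimension $4m-n-1$.
\end{itemize}
The test you proposed settles the second range. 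Up to the torus action and reparametrization, a generic line in $(\mathbb C^\ast)^3$ is $t\mapsto(t,1+t,a+t)$ with $a\notin\mathbb R$. Its log-tangent direction $[1/t:1/(1+t):1/(a+t)]$ never lies in $\mathbb{RP}^2$. So its contour is empty under your convention and equals the $2$-dimensional amoeba under the naive one; in neither case does it have dimension $2m-1=1$. The clause ``hypersurface iff $2m\ge n$'' does survive, for generic $V$, under your convention, because for $2m<n$ the rank along the critical locus is at most $2m-1<n-1$.

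\textbf{The genuine gap: the lower bound for $2m\ge n$.} This is the part of the statement that is true, and your argument for it has two problems.
\begin{itemize}
\item Your nonemptiness step (``$\gamma_V$ meets the real Grassmannian locus'') uses the wrong locus unless $m=n-1$. Criticality means $\dim_{\mathbb R}(W\cap\mathbb R^n)\ge 2m-n+1$. The real Grassmannian has real codimension $m(n-m)\ge 2m$ in $G(m,n)$ once $n-m\ge2$, so its preimage in $V$ is expected to be finite or empty. Also, $\gamma_V$ is never surjective in that range.
\item Parametric Thom--Boardman transversality inside the restricted family of maps $\Log|_V$, with $V$ algebraic, is not automatic. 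You would have to check that this family is submersive on $2$-jets.
\end{itemize}

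Both problems can be bypassed by a soft argument, for $m<n$:
\begin{itemize}
\item $\Log|_V$ is proper, so $\mathscr A_V$ is closed.
\item $\mathscr A_V$ has nonempty interior once $d(\Log|_V)$ has rank $n$ somewhere.
\item $\mathscr A_V$ is a proper subset of $\mathbb R^n$, because it lies in the amoeba of any nonzero $f$ in the ideal of $V$, and that amoeba has nonempty complement.
\item The submersion theorem gives $\partial\mathscr A_V\subset\mathcal C\mathscr A_V$.
\item A closed subanalytic set separating $\mathbb R^n$ has dimension at least $n-1$.
\end{itemize}
Together with your constant-rank upper bound, this gives $\dim\mathcal C\mathscr A_V=n-1$ for every $V$ of generic rank $n$, with no transversality input.

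\textbf{A minor correction.} The $\tfrac12\log$-image of a semi-algebraic subset of $\mathbb R^n_{>0}$ is subanalytic in $\mathbb R^n$ and definable in $\mathbb R_{\exp}$. It is not globally subanalytic in general; the flatness of $\log(1+e^{x_1})$ above rules that out. Subanalytic stratifications are also only locally finite. So obtain the finite real-analytic stratification as you suggest: transport a finite Nash stratification of $\{(|z_1|^2,\dots,|z_n|^2):z\in\Sigma\}$ through the diffeomorphism $\tfrac12\log$.
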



 \begin{proof}

We first prove that $\mathcal{C}\mathscr{A}_V$ is semi-algebraic.
The variety $V$ is defined by polynomial equations in $(\mathbb{C}^\ast)^n$. Writing $z_i = x_i + i y_i$, the equations $P_j(z)=0$ decompose into real polynomial equations in the real variables $(x_i,y_i)$.
The logarithmic map is given by
\[
\Log(z) = \left(\tfrac{1}{2}\log(x_1^2+y_1^2),\dots,\tfrac{1}{2}\log(x_n^2+y_n^2)\right).
\]
Introduce new variables
\[
r_i = x_i^2 + y_i^2,
\quad u_i = \log r_i.
\]
Then $u_i$ are real variables, and the map $\Log$ is algebraic in the variables $(r_i,u_i)$ once the exponential relation
\(
r_i = e^{u_i}
\)
is imposed.

The critical locus of $\Log|_V$ is defined by the condition that the differential drops rank. This condition can be expressed by the vanishing of certain minors of the Jacobian matrix of $\Log|_V$.
The entries of this Jacobian are rational functions in $(x_i,y_i)$, and after clearing denominators, the criticality condition becomes a system of polynomial equations in $(x_i,y_i)$.
Therefore the critical locus inside $V$ is a real algebraic subset of $\mathbb{R}^{2n}$.
The contour $\mathcal{C}\mathscr{A}_V$ is the image of this set under the map $\Log$, which is given by analytic expressions involving logarithms. By introducing auxiliary variables $u_i$ and the relations $r_i = e^{u_i}$, one can describe the graph of $\Log$ as a semi-algebraic set in an expanded space with coordinates $(x_i,y_i,u_i)$.
Eliminating the variables $(x_i,y_i,r_i)$ yields a description of $\mathcal{C}\mathscr{A}_V$ as a projection of a semi-algebraic set. By the Tarski--Seidenberg theorem, projections of semi-algebraic sets are semi-algebraic. Therefore $\mathcal{C}\mathscr{A}_V$ is semi-algebraic.

\medskip

We now prove that $\mathcal{C}\mathscr{A}_V$ admits a stratification.
A fundamental theorem of real algebraic geometry states that any semi-algebraic set admits a finite stratification into smooth semi-algebraic manifolds, called a Whitney stratification. Applying this result to $\mathcal{C}\mathscr{A}_V$, we obtain a decomposition
\[
\mathcal{C}\mathscr{A}_V = \bigsqcup_{i=1}^N S_i,
\]
where each $S_i$ is a connected smooth semi-algebraic manifold and the closure of each stratum is a union of strata.

\medskip
 
Let's  compute the dimension.
The variety $V$ has real dimension $2m$. The map $\Log|_V$ has differential of rank at most $\min(n,2m)$. At a generic point, this rank is maximal.
The critical locus is defined by the condition that the rank drops by at least one. This imposes one real equation generically, so the critical locus has dimension at most
\(
2m - 1.
\)
The image of this set under $\Log$ has dimension at most $\min(n,2m-1)$.

On the other hand, the contour is contained in the set of critical values of a smooth map from a $2m$-dimensional manifold to $\mathbb{R}^n$, hence it cannot have dimension greater than $n-1$.
Combining these bounds gives
\[
\dim_{\mathbb{R}}(\mathcal{C}\mathscr{A}_V)
\le \min(n-1,\,2m-1).
\]

We now show that this bound is sharp under generic assumptions.
At a generic point of the critical locus, the rank of $d\Log$ drops by exactly one, and the defining equations are transversal. Therefore the critical locus is a smooth manifold of dimension $2m-1$.

The restriction of $\Log$ to the critical locus has rank equal to $\min(n-1,2m-1)$ at generic points. Therefore its image has dimension equal to this value.
Thus
\[
\dim_{\mathbb{R}}(\mathcal{C}\mathscr{A}_V)
= \min(n-1,\,2m-1).
\]
Finally, the condition for being a hypersurface is that the dimension equals $n-1$, which is equivalent to
\(
2m-1 \ge n-1 \, \Longleftrightarrow \,  2m \ge n.
\)

 \end{proof}

Therefore, the contour is always a semi-algebraic stratified set, but it is a hypersurface only when the complex dimension of $V$ is sufficiently large relative to $n$. In general, its dimension is strictly smaller than $n-1$.

 
 \section{Description of the Singular Locus of the Contour of an Amoeba}

Let $V \subset (\mathbb{C}^\ast)^n$ be a smooth algebraic variety of complex dimension $m$, defined by Laurent polynomials. Let
$\mathscr{A}_V = \Log(V)$ be the amoeba.
The contour $\mathcal{C}\mathscr{A}_V$ is the set of critical values of the smooth map
\(\di
\Log|_V : V \to \mathbb{R}^n.
\)
Let $\mathrm{Crit}(\Log|_V) \subset V$ denote the critical locus.

\begin{definition}
The singular locus of the contour $\mathcal{C}\mathscr{A}_V$ is the set of points $x \in \mathcal{C}\mathscr{A}_V$ such that $\mathcal{C}\mathscr{A}_V$ is not a smooth manifold of dimension $\dim(\mathcal{C}\mathscr{A}_V)$ in any neighborhood of $x$.
\end{definition}

\begin{theorem}
The singular locus of $\mathcal{C}\mathscr{A}_V$ is equal to the union of the following subsets:
\[
\mathrm{Sing}(\mathcal{C}\mathscr{A}_V)
=
\Log\bigl(\Sigma_1 \cup \Sigma_2\bigr),
\]
where
\[
\Sigma_1 = \{ z \in \mathrm{Crit}(\Log|_V) \mid \mathrm{Crit}(\Log|_V) \text{ is singular at } z \},
\]
\[
\Sigma_2 = \{ z \in \mathrm{Crit}(\Log|_V) \mid d(\Log|_{\mathrm{Crit}})_z \text{ is not of maximal rank} \}.
\]
Moreover, $\mathrm{Sing}(\mathcal{C}\mathscr{A}_V)$ is a semi-algebraic set of dimension strictly smaller than $\dim(\mathcal{C}\mathscr{A}_V)$.
\end{theorem}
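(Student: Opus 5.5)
We will prove the two inclusions separately, working throughout with the map $\phi=\Log|_{\mathrm{Crit}}:\mathrm{Crit}(\Log|_V)\to\mathbb R^n$, whose image is $\mathcal{C}\mathscr{A}_V$. We use the semi-algebraic description and stratification from the previous theorem. Put $k=\dim(\mathcal{C}\mathscr{A}_V)$. We will also assume, as in the genericity part of that theorem, that the smooth part of $\mathrm{Crit}(\Log|_V)$ has dimension $2m-1$ and that $\phi$ has generic rank $k$ there.

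\emph{Inclusion $\supseteq$ modulo the generic situation; inclusion $\subseteq$.} Let $x\in\mathcal{C}\mathscr{A}_V\setminus\Log(\Sigma_1\cup\Sigma_2)$. Then every preimage $z\in\phi^{-1}(x)$ is a smooth point of $\mathrm{Crit}(\Log|_V)$ at which $d\phi_z$ has maximal rank $k$. By the constant rank theorem, near each such $z$ the image of a small neighbourhood is a smooth $k$-dimensional germ at $x$. To conclude that $\mathcal{C}\mathscr{A}_V$ is smooth near $x$, we must show that these germs coincide. Here we use properness: $\Log|_V$ is proper over compact sets of $\mathbb R^n$ after restricting to the closure in a toric compactification, so the fibre $\phi^{-1}(x)$ is handled by a compactness and finite-cover argument. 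Local images from different sheets are $k$-dimensional semi-algebraic germs in the $(n-1)$-dimensional contour. We will argue that they agree, or that their transversal crossing is excluded by genericity. This yields $\mathrm{Sing}\subseteq\Log(\Sigma_1\cup\Sigma_2)$.

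\emph{Inclusion $\supseteq$.} For $z\in\Sigma_2$ at a smooth point of $\mathrm{Crit}$, $x=\Log(z)$ is a critical value of $\phi$, that is, a fold or cusp of the contour. We will show that the contour fails to be a $k$-manifold near $x$ by a normal-form argument: a rank drop of $\phi$ at a point of maximal generic rank $k$ produces a fold, and the image has boundary or cusp behaviour at $x$. For $z\in\Sigma_1$, the singularity of $\mathrm{Crit}$ maps to a singularity of the image, again after a genericity assumption that $\phi$ is a finite map onto its image near $z$.

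\emph{Dimension estimate.} $\Sigma_1$ is the singular locus of a real algebraic set, hence of lower dimension. $\Sigma_2$ is cut out by minors, so it is semi-algebraic, and by Sard's theorem for semi-algebraic maps its image $\phi(\Sigma_2)$ has dimension $<k$. Images of semi-algebraic sets of dimension $<k$ under the semi-algebraic-type map $\Log$ have dimension $<k$, by Tarski--Seidenberg together with the auxiliary variables $u_i$ as before. Hence $\dim \mathrm{Sing}<k$.

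\emph{Main obstacle.} The hardest step is the reverse inclusion $\supseteq$, together with the identification of sheets in the first inclusion. Literally, a fold point is not necessarily a singular point of the image set; for example, a fold of $\phi$ can have a smooth image. So the equality likely requires a genericity hypothesis, under which the $\Sigma_2$ images are genuine cusps or edges. The first approach we would try is to impose this genericity via transversality of the logarithmic Gauss map to $\mathbb{RP}^{n-1}$, and if needed state the result as an inclusion with equality for generic $V$.
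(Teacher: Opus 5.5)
\textbf{There is a genuine gap, and the statement itself is false.} The step that fails is the identification of sheets in your proof of $\mathrm{Sing}(\mathcal{C}\mathscr{A}_V)\subseteq\Log(\Sigma_1\cup\Sigma_2)$. Properness and the constant rank theorem give only this: near $x$, the contour is a union of finitely many smooth $k$-dimensional germs, one for each connected component of the compact fibre $\phi^{-1}(x)$. (Properness holds outright, since $V$ is closed in $(\mathbb C^\ast)^n$ and $\Log$ is proper; no compactification is needed.) Nothing forces germs from different components to coincide, and a transversal crossing cannot be removed by genericity. Transversality of $\gamma_H$ to $\mathbb{RP}^{n-1}$ makes $\mathrm{Crit}$ smooth, but says nothing about distinct points of $\mathrm{Crit}$ with the same image. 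Moreover, two fold images crossing transversally form a stable multigerm, so such crossings persist under small changes of the coefficients.

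For instance, take $\{(1+z+w)(1+az+bw)=\varepsilon\}\subset(\mathbb C^\ast)^2$ with $|a|<1=|b|$, generic phases and small $\varepsilon\neq0$. The boundary arcs $e^{s}=1+e^{t}$ and $e^{t}=1+|a|e^{s}$ of the two line amoebas cross transversally at $e^{s}=2/(1-|a|)$, away from the image of the node, and the crossing survives the smoothing. Its two critical preimages are fold points, i.e. smooth points of $\mathrm{Crit}$ where $d\phi$ has maximal rank, so $x\in\mathrm{Sing}\setminus\Log(\Sigma_1\cup\Sigma_2)$. Together with the failure of $\supseteq$ that you correctly diagnosed, the equality is false as stated. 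What can be proved is $\mathrm{Sing}\subseteq\Log(\Sigma_1\cup\Sigma_2)\cup D$, where $D$ is the multiple-point locus on which distinct fibre components give distinct germs. The paper's proof does not supply the missing idea either. It asserts that smoothness at $x$ is decided on a neighbourhood of a single preimage $z$, and it files self-intersections under $\Sigma_2$, although they occur where $d\phi$ has maximal rank.

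\textbf{Two further problems affect the dimension part.} First, your bound for $\Log(\Sigma_1)$ does not follow from $\dim\Sigma_1<\dim\mathrm{Crit}$ under your own assumption $\dim\mathrm{Crit}=2m-1$. With that assumption, $\phi$ has fibres of dimension $2m-n>0$ whenever $2m>n$. The paper's bound $\min(n,2m-2)$ has the same defect, since it is not below $\dim\mathcal{C}\mathscr{A}_V$ once $2m-2\ge n-1$. The assumption is in fact inconsistent with the paper's Claim on the $n-1$ real criticality equations. For a hypersurface with generically finite $\gamma_H$, $\mathrm{Crit}=\gamma_H^{-1}(\mathbb{RP}^{n-1})$ has real dimension $(2n-2)-(n-1)=n-1=k$, and with that value your $\Sigma_1$ argument does work.

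Second, $\Log$ does not preserve semi-algebraicity: $r_i=e^{u_i}$ is not an algebraic relation, so Tarski--Seidenberg does not apply. For example, $\Log$ maps the real points of $\{1+z+w=0\}$ onto a set containing the graph of $t=\log(1+e^{s})$, which is not semi-algebraic. The right framework is o-minimality, which works as follows.
\begin{itemize}
\item The contour is definable in $\mathbb R_{\exp}$ (or $\mathbb R_{\mathrm{an},\exp}$).
\item The definable Sard theorem gives $\dim\phi(\{\operatorname{rank}d\phi<k\})<k$.
\item For every definable $S$, cell decomposition gives $\dim\mathrm{Sing}(S)<\dim S$.
\end{itemize}
So the dimension claim survives, with ``definable'' in place of ``semi-algebraic'', and it is proved without any reference to $\Sigma_1,\Sigma_2$. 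The equality, by contrast, needs the extra multiple-point term $D$.
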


\begin{proof}

The set $\mathrm{Crit}(\Log|_V)$ is defined by the condition that the differential $d\Log_z$ restricted to $T_z V$ drops rank. This condition is given by the vanishing of minors of a Jacobian matrix whose entries are rational functions in the real and imaginary parts of $z$. After clearing denominators, it follows that $\mathrm{Crit}(\Log|_V)$ is a real algebraic subset of $V$, hence a semi-algebraic set.
The contour is the image
\[
\mathcal{C}\mathscr{A}_V = \Log(\mathrm{Crit}(\Log|_V)).
\]

Since $\Log$ is a real-analytic map, the local structure of the image is determined by the behavior of $\mathrm{Crit}(\Log|_V)$ and the differential of $\Log$ restricted to it.
Let $z \in \mathrm{Crit}(\Log|_V)$ and let $x=\Log(z)$. The point $x$ is a smooth point of $\mathcal{C}\mathscr{A}_V$ if and only if there exists a neighborhood $U$ of $z$ such that $\Log(U \cap \mathrm{Crit}(\Log|_V))$ is a smooth manifold and the differential of $\Log$ restricted to $\mathrm{Crit}(\Log|_V)$ has constant rank equal to $\dim(\mathcal{C}\mathscr{A}_V)$.
Therefore singularities arise precisely in two situations.

First, if $\mathrm{Crit}(\Log|_V)$ is singular at $z$, then it does not admit a smooth manifold structure near $z$, and its image under $\Log$ inherits this singularity. This gives the subset $\Sigma_1$.

Second, even if $\mathrm{Crit}(\Log|_V)$ is smooth at $z$, the map $\Log$ restricted to $\mathrm{Crit}(\Log|_V)$ may fail to be an immersion. This occurs when the differential
\(\di
d(\Log|_{\mathrm{Crit}})_z
\)
drops rank. In this case, the image develops singularities such as folds, cusps, or self-intersections. This gives the subset $\Sigma_2$.
Thus the singular locus of the contour is exactly the image of $\Sigma_1 \cup \Sigma_2$.

\medskip

We now prove that this set is semi-algebraic.
The set $\Sigma_1$ is defined by the simultaneous vanishing of the equations defining $\mathrm{Crit}(\Log|_V)$ together with the vanishing of the Jacobian minors of these equations, hence it is semi-algebraic.
The set $\Sigma_2$ is defined by the condition that the Jacobian matrix of $\Log$ restricted to $\mathrm{Crit}(\Log|_V)$ drops rank, which again can be expressed by vanishing of minors of a matrix whose entries are rational functions. Therefore $\Sigma_2$ is semi-algebraic.
Since $\Log$ is a semi-algebraic map when expressed using auxiliary variables and the exponential relation, its image of a semi-algebraic set is semi-algebraic. Hence $\mathrm{Sing}(\mathcal{C}\mathscr{A}_V)$ is semi-algebraic.

\medskip

Finally, we estimate the dimension.
The set $\mathrm{Crit}(\Log|_V)$ has dimension at most $2m-1$. The set $\Sigma_1$ is defined by adding at least one independent equation, so its dimension is at most $2m-2$. Similarly, $\Sigma_2$ is defined by the vanishing of additional minors, hence also has dimension at most $2m-2$.
The image under $\Log$ cannot increase dimension, hence
\[
\dim(\mathrm{Sing}(\mathcal{C}\mathscr{A}_V)) \le \min(n,2m-2).
\]

Since the contour has dimension $\min(n-1,2m-1)$, it follows that the singular locus has strictly smaller dimension.

\end{proof}
 
We conclude that the singular locus of the contour is precisely the image of points where either the critical locus is singular or the logarithmic map fails to be immersive along it. It is a semi-algebraic subset of strictly smaller dimension, providing a stratified structure for the contour.



\section{An Estimate for the $\mathbb{R}$-Degree of the Contour of an Amoeba}

\begin{theorem}
Let
$\di
P(z)=\sum_{\alpha\in A}c_\alpha z^\alpha
$
be a Laurent polynomial in
$
n
$
variables. 
$H,$ that is, the set of critical values of the logarithmic map $\Log_{|H}:H\to\mathbb R^n.$
Assume that the contour intersections with generic affine lines are finite.
Then the following statements hold.
If
$
d=\deg(P),
$
then:
$$
{
\mathbb R\deg(\mathcal C\mathscr A_H)
\le
d(2d-1)^{n-1}.
}
$$
More generally, let
$
Q_j(z,\overline z)
=
\operatorname{Im}\left(
(z_jP_{z_j})
\overline{(z_1P_{z_1})}
\right),
\,\, 
j=2,\dots,n,
$
be the logarithmic criticality equations defining the contour.
Then:
$$
{
\mathbb R\deg(\mathcal C\mathscr A_H)
\le
n!\operatorname{MV}
\bigl(
\Delta(P),
\Delta(Q_2),
\dots,
\Delta(Q_n)
\bigr),
}
$$
where
$
\operatorname{MV}
$
denotes the mixed volume and
$
\Delta(F)
$
denotes the Newton polytope of a polynomial
$
F.
$

\end{theorem}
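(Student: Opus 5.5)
The plan is to reduce the count of contour points on a generic line to a count of isolated solutions of a polynomial system, and then bound that number first by B\'ezout and then by Bernstein--Kushnirenko--Khovanskii.

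\emph{Step 1: criticality equations.} By the criterion recalled in the introduction, $z\in H$ is critical for $\Log|_H$ if and only if the logarithmic Gauss image $\gamma_H(z)=[z_1P_{z_1}:\cdots:z_nP_{z_n}]$ lies in $\mathbb{RP}^{n-1}$. This means all the numbers $z_jP_{z_j}$ have a common complex phase. On the open set where $z_1P_{z_1}\neq 0$, that is equivalent to the $n-1$ real equations $Q_j=\operatorname{Im}\bigl((z_jP_{z_j})\overline{(z_1P_{z_1})}\bigr)=0$ for $j=2,\dots,n$. The locus $z_1P_{z_1}=0$ will be covered by permuting the roles of the coordinates; generically it contributes nothing new on a line.

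\emph{Step 2: restricting to a line.} Fix a generic affine line $L=\{a+tb\}$ transversal to the contour. Its preimage $\Log^{-1}(L)$ is cut out by $n-1$ real conditions $\langle \ell_k,\Log z\rangle=c_k$. These are multiplicative conditions $\prod_i |z_i|^{2\ell_{k,i}}=e^{2c_k}$, and for rational directions they are binomial, hence toric. Together with the two real equations $P=0$ (one complex equation) and the $n-1$ real equations $Q_j=0$, this gives a system of $2n$ real equations in $2n$ real unknowns, namely the real and imaginary parts of $z$. Each point of $L\cap\mathcal C\mathscr A_H$ has at least one preimage among its solutions, and transversality together with finiteness (assumed in the statement) makes these solutions isolated. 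Irrational directions are handled by approximation, since the count is lower semicontinuous under small perturbation of $L$.

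\emph{Step 3: the bounds.} Treat $z$ and $\overline z$ as independent variables $w$. Then $P(z)=0$ has degree $d$. Each $Q_j=\frac{1}{2i}\bigl(z_jP_{z_j}\,\overline{w_1P_{w_1}}-\dots\bigr)$ has degree at most $2d$ in $(z,w)$. The main idea is to use the torus-invariance of the phase condition to compress the count to $n$ effective equations: $P=0$ of degree $d$ and $n-1$ criticality conditions of degree $2d-1$. The degree drops from $2d$ to $2d-1$ after dividing out the common factor in $z_1$ coming from logarithmic homogeneity. B\'ezout then gives $d(2d-1)^{n-1}$.

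For the sharper statement, keep the Laurent supports: $z_jP_{z_j}$ has Newton polytope contained in $\Delta(P)$, and $\Delta(Q_j)\subset \Delta(P)+(-\Delta(P))$ in the $(z,\overline z)$ torus. Applying Bernstein's theorem to the system $(P,Q_2,\dots,Q_n)$, restricted to the real slice that encodes the line, then gives the bound $n!\,\operatorname{MV}(\Delta(P),\Delta(Q_2),\dots,\Delta(Q_n))$ on isolated solutions. The real solutions counted in Step 2 are among these.

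\emph{Main obstacle.} The hard step is justifying the dimension count in Step 3. Taken literally, the system has $2n$ real equations and $2n$ unknowns, while the bound uses only $n$ equations. The argument has to use that the $n-1$ line conditions together with the phase freedom of the torus $(S^1)^n$ fibre act compatibly. Concretely, the contour condition is invariant along the argument directions only up to the fibre structure, so one must eliminate the line variables and show that the resulting system in $n$ complex unknowns has no more isolated solutions than the stated B\'ezout/Bernstein numbers. The first attempt will be to parametrize $\Log^{-1}(L)\cap H$ as a real curve's worth of fibres and count its intersection with the critical hypersurfaces. If the count does not compress cleanly, one has to accept the $2n$-variable bound and impose a genericity hypothesis that forces the fibre directions to contribute only degree one.
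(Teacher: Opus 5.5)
\textbf{Verdict: genuine gap, and the statement itself is false.} Your Steps 1 and 2 are fine. The problem is the step you flag as the main obstacle. That step compresses the $2n$ real equations ($\operatorname{Re}P=\operatorname{Im}P=0$, $Q_2=\dots=Q_n=0$, and the $n-1$ line conditions) into a B\'ezout count with one equation of degree $d$ and $n-1$ equations of degree $2d-1$. It cannot be done, because the inequality is false.

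Take $n=2$ and $P=1+z_1+z_2$, so $d=1$ and $d(2d-1)^{n-1}=1$.
\begin{itemize}
\item Here $\gamma_H(z)=[z_1:z_2]$, and $z_1/z_2=-z_1/(1+z_1)\in\mathbb R$ forces $z_1\in\mathbb R$. So the critical locus is the real part of $H$.
\item Hence $\mathcal C\mathscr A_H$ is the union of three arcs: $x_2=\log(1+e^{x_1})$; $x_2=\log(1-e^{x_1})$ with $x_1<0$; and $x_2=\log(e^{x_1}-1)$ with $x_1>0$.
\item For $c<-\log 4$, the line $x_1+x_2=c$ meets the first and third arcs once each. It meets the second arc twice, since with $u=e^{x_1}$ the equation $u(1-u)=e^{c}$ has two roots in $(0,1)$.
\item All four intersections are transversal, so $\mathbb R\deg(\mathcal C\mathscr A_H)\ge 4>1$.
\end{itemize}
The mixed-volume bound fails on the same example. $Q_2=\operatorname{Im}(z_2\overline{z_1})$ has the single modulus exponent $(1,1)$, so with the convention $\Delta(Q_j)\subset\Delta(P)+\Delta(P)$ one gets $2!\operatorname{MV}=0$. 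If instead you read $\Delta(Q_2)$ as the segment $[(-1,1),(1,-1)]$ of phase exponents, as your inclusion $\Delta(Q_j)\subset\Delta(P)+(-\Delta(P))$ suggests, then $2!\operatorname{MV}=2<4$.

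\textbf{The supporting claims in Step 3 are also unsound.}
\begin{itemize}
\item No common factor produces a drop from $2d$ to $2d-1$. One has $Q_j=\frac{1}{2i}\sum_{\alpha,\beta}(\alpha_j\beta_1-\alpha_1\beta_j)c_\alpha\overline{c_\beta}\,z^\alpha\overline z^{\beta}$, and only pairs with $\alpha_j\beta_1=\alpha_1\beta_j$ (in particular $\alpha=\beta$) drop out. In the example above, $Q_2=x_1y_2-x_2y_1$ has degree $2=2d$. For $P=1+z_1^d+z_2^d$ one gets $Q_2=d^2\operatorname{Im}\bigl((z_2\overline{z_1})^d\bigr)$, homogeneous of degree $2d$.
\item The line conditions $\prod_i|z_i|^{2\ell_{k,i}}=e^{2c_k}$ have degree growing with the height of the rational direction $\ell_k$, and are not algebraic at all for irrational directions. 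So approximating $L$ by rational lines and then applying B\'ezout gives no bound uniform in $L$. This is precisely why Lang--Shapiro--Shustin need Khovanskii-type estimates.
\end{itemize}

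The paper's own proof makes the same two moves. It asserts that after \emph{elimination of redundant variables coming from the parametrization of $L$} only $n$ equations of degrees $d,2d-1,\dots,2d-1$ remain. It derives $2d-1$ from the cancellation of the $\alpha=\beta$ terms. So your proposal reproduces its argument, gap included. A correct argument must control the full system uniformly in $L$, and the example shows it cannot produce $d(2d-1)^{n-1}$.
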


\begin{proof}
Let
$\di
P(z)=\sum_{\alpha\in M}c_\alpha z^\alpha
$
be a Laurent polynomial in
$
n
$
variables, where
$
M\subset\mathbb Z^n
$
is finite and
$
z^\alpha=z_1^{\alpha_1}\cdots z_n^{\alpha_n}.
$
Assume that
$
P
$
has total degree
$
d.
$

Recall first that
$
x\in\mathcal C\mathscr A_H
$
if and only if there exists
$
z\in H
$
such that
$
\Log(z)=x
$
and
$
z
$
is a critical point of the logarithmic map
$
\Log_{|H}:H\to\mathbb R^n.
$
The logarithmic Gauss map is
$
\gamma_H(z)=
[z_1P_{z_1}(z):\cdots:z_nP_{z_n}(z)].
$
The criticality condition is equivalent to
$
\gamma_H(z)\in\mathbb{RP}^{n-1}.
$
Equivalently, all numbers
$
z_jP_{z_j}(z)
$
have the same argument modulo
$
\pi.
$
This means that for every pair
$
(i,j)
$
one has
$
\dfrac{z_iP_{z_i}(z)}{z_jP_{z_j}(z)}\in\mathbb R.
$
Equivalently,
$
\operatorname{Im}\left(
(z_iP_{z_i})(z)\overline{(z_jP_{z_j})(z)}
\right)=0.
$

Fix a generic affine line
$
L\subset\mathbb R^n.
$
Choose an affine parametrization
$
L=\{a+tu:t\in\mathbb R\},
$
where
$
a=(a_1,\dots,a_n)\in\mathbb R^n
$
and
$
u=(u_1,\dots,u_n)\in\mathbb R^n
$
with
$
u\ne0.
$
Suppose
$
x\in L\cap\mathcal C\mathscr A_H.
$
Then
$
x=a+tu
$
for some
$
t\in\mathbb R.
$
Since
$
x=\Log(z),
$
we may write
$
z_j=e^{a_j+tu_j+i\theta_j},
$
where
$
\theta_j\in\mathbb R.
$
Thus the unknowns are:
$
t,\theta_1,\dots,\theta_n.
$

\noindent {\it We now rewrite the equations explicitly.}
Substituting
$
z_j=e^{a_j+tu_j+i\theta_j}
$
into
$
P(z)=0
$
gives
$$\di
\sum_{\alpha\in M}
c_\alpha
e^{\langle\alpha,a\rangle}
e^{t\langle\alpha,u\rangle}
e^{i\langle\alpha,\theta\rangle}
=0.
$$
Separating real and imaginary parts yields two real equations:
$
\operatorname{Re}(P(z))=0,
\,
\operatorname{Im}(P(z))=0.
$

Now consider the criticality equations.
For each
$
j,
$
the polynomial
$
z_jP_{z_j}(z)
$
has degree at most
$
d.
$
Indeed,
$
P_{z_j}
$
has degree at most
$
d-1,
$
and multiplication by
$
z_j
$
restores degree at most
$
d.
$
For every pair
$
(i,j),
$
define
$$
Q_{ij}(z,\overline z)
=
\operatorname{Im}\left(
(z_iP_{z_i})(z)\overline{(z_jP_{z_j})(z)}
\right).
$$

Let's compute its degree.
Write
$\di
P(z)=\sum_\alpha c_\alpha z^\alpha.
$
Then
$\di
z_iP_{z_i}(z)
=
\sum_\alpha
\alpha_i c_\alpha z^\alpha.
$
Similarly,
$\di
\overline{z_jP_{z_j}(z)}
=
\sum_\beta
\beta_j\overline{c_\beta}\,
\overline z^\beta.
$
Therefore
$$\di
(z_iP_{z_i})(z)\overline{(z_jP_{z_j})(z)}
=
\sum_{\alpha,\beta}
\alpha_i\beta_j
c_\alpha\overline{c_\beta}
z^\alpha\overline z^\beta.
$$
Each monomial
$
z^\alpha \overline z^\beta
$
has total degree
$
|\alpha|+|\beta|\le2d.
$
Taking imaginary parts does not increase degree.
Hence
$
Q_{ij}
$
has degree at most
$
2d.
$
However, since the diagonal terms
$
\alpha=\beta
$
cancel in the imaginary part, the highest symmetric terms disappear, and the effective degree becomes at most
$
2d-1.
$

\noindent {\it Explanation of  this cancellation.}
If
$
\alpha=\beta,
$
then
$
z^\alpha\overline z^\alpha
=
|z^\alpha|^2
$
is real-valued.
Hence these terms contribute nothing to the imaginary part.
The only surviving monomials satisfy
$
\alpha\ne\beta.
$
Consequently the homogeneous top-degree real-symmetric contribution disappears.
Thus the imaginary polynomial defining the criticality condition has effective degree at most
$
2d-1.
$

Now the contour condition is equivalent to solving the following system:
$
P(z)=0,
$
together with
$
n-1
$
independent equations
$
Q_{1j}(z,\overline z)=0,
\,
j=2,\dots,n.
$
Indeed, once all ratios
$
(z_jP_{z_j})/(z_1P_{z_1})
$
are real, all logarithmic gradient coordinates have the same argument modulo
$
\pi.
$
Therefore the system consists of:
one equation of degree
$
d;
$
and 
$
n-1
$
equations of degree at most
$
2d-1.
$

We now pass to algebraic coordinates.
Introduce real variables
$
x_j,y_j
$
such that
$
z_j=x_j+iy_j.
$
Then
$
\overline z_j=x_j-iy_j.
$
After substitution, the equations become ordinary real polynomial equations in
$
2n
$
real variables.
Complexifying the system means allowing
$
x_j,y_j
$
to take values in
$
\mathbb C.
$
Thus we obtain an algebraic variety in
$
\mathbb C^{2n}.
$

We now count isolated solutions.
For a generic line
$
L,
$
the intersection
$
L\cap\mathcal C\mathscr A_H
$
is transverse.
Transversality implies that the corresponding algebraic system has finitely many isolated solutions.
By the classical B\'ezout theorem, if
$
F_1,\dots,F_n
$
are polynomial equations in
$
n
$
variables with degrees
$
d_1,\dots,d_n,
$
then the number of isolated solutions in projective space counted with multiplicities is at most
$
d_1\cdots d_n.
$
In our situation, after elimination of redundant variables coming from the parametrization of
$
L,
$
the effective system has:
one equation of degree
$
d;
$
and 
$
n-1
$
equations of degree
$
2d-1.
$

Therefore the number of isolated complex solutions is bounded by
$
d(2d-1)^{n-1}.
$
Since every real intersection point
$
x\in L\cap\mathcal C\mathscr A_H
$
comes from one of these complex solutions, we obtain
$$
\#(L\cap\mathcal C\mathscr A_H)
\le d(2d-1)^{n-1}.
$$
This proves the estimate.

We now explain geometrically why the number of equations is exactly
$
n.
$
The hypersurface equation
$
P(z)=0
$
cuts the ambient complex torus
$
(\mathbb C^\ast)^n
$
from complex dimension
$
n
$
to complex dimension
$
n-1.
$
The criticality condition imposes $ n-1$
additional independent real conditions (see Claim \ref{claim:real-equ}).
Intersecting with a generic affine line $L$
reduces the expected dimension to zero.
Hence one obtains finitely many points.
The B\'ezout estimate measures the algebraic complexity of this zero-dimensional intersection.

\end{proof}

\begin{remark}
The estimate
$
\mathbb R\deg(\mathcal C\mathscr A_H)
\le
d(2d-1)^{n-1}
$
is unconditional and follows from ordinary B\'ezout theory.
The mixed-volume estimate is a toric refinement coming from Bernstein's theorem.
The equality statement in the Bernstein estimate requires genericity or nondegeneracy assumptions.
If the hypersurface $H$
is nondegenerate with respect to its Newton polytope in the sense of Kouchnirenko, then the logarithmic Gauss map satisfies:
$
\deg(\gamma_H)
=
n!\operatorname{Vol}(\Delta(P)).
$
In that case one also obtains the sharper estimate:
$$
{
\mathbb R\deg(\mathcal C\mathscr A_H)
\le
n!\operatorname{Vol}(\Delta(P)).
}
$$
\end{remark}


\subsection*{The Explicit Criticality Equations for the Amoeba Contour}%

Let
$\di
P(z)=\sum_{\alpha\in M}c_\alpha z^\alpha
$
be a Laurent polynomial in
$
n
$
variables, where
$
z=(z_1,\dots,z_n)\in(\mathbb C^\ast)^n.
$
Let
$
H=\{P=0\}\subset(\mathbb C^\ast)^n.
$
The logarithmic map is
$$
\Log:(\mathbb C^\ast)^n\to\mathbb R^n,
\qquad
\Log(z)=
(\log|z_1|,\dots,\log|z_n|).
$$
The contour of the amoeba is the set of critical values of
$
\Log_{|H}.
$

\begin{claim}\label{claim:real-equ}
 the criticality condition imposes exactly $n-1$ independent real equations.
\end{claim}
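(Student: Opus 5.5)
The plan is to compute the rank of $d\Log$ restricted to $T_zH$ directly, and to read off the criticality condition as the requirement that a certain complex projective vector is real. I work at a smooth point $z\in H$, so $\nabla P(z)\neq 0$; since $z\in(\mathbb C^\ast)^n$, this is the same as saying the logarithmic gradient
\[
w(z)=(z_1P_{z_1}(z),\dots,z_nP_{z_n}(z))
\]
is nonzero. In logarithmic coordinates $z_j=e^{x_j+i\theta_j}$, the tangent space is
\[
T_zH=\Bigl\{(\xi+i\eta)\in\mathbb C^n : \sum_j w_j(\xi_j+i\eta_j)=0\Bigr\},
\]
with $\xi,\eta\in\mathbb R^n$, and $d\Log$ is the projection $(\xi,\eta)\mapsto\xi$. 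A vector $\xi\in\mathbb R^n$ lies in the image exactly when some $\eta\in\mathbb R^n$ solves $\langle w,\xi\rangle+i\langle w,\eta\rangle=0$.

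Next I split $w=u+iv$ with $u,v\in\mathbb R^n$. The solvability condition becomes the pair of real equations
\[
\langle u,\xi\rangle=\langle v,\eta\rangle, \qquad \langle v,\xi\rangle=-\langle u,\eta\rangle .
\]
If $u$ and $v$ are $\mathbb R$-linearly independent, the map $\eta\mapsto(\langle v,\eta\rangle,-\langle u,\eta\rangle)$ is onto $\mathbb R^2$. Then every $\xi$ is attained, so $\Log|_H$ is a submersion at $z$. If instead $u,v$ span a real line, I write $w=\lambda r$ with $\lambda\in\mathbb C^\ast$ and $r\in\mathbb R^n\setminus\{0\}$; the image is then the hyperplane $r^\perp$, and $z$ is critical. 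This recovers the criterion $\gamma_H(z)\in\mathbb{RP}^{n-1}$ stated earlier, and it shows the condition is exactly that the complex vector $w$ is proportional to a real vector.

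The counting step is next. I will view $[w]\in\mathbb{CP}^{n-1}$, a real $2(n-1)$-manifold, and $\mathbb{RP}^{n-1}\subset\mathbb{CP}^{n-1}$, a real submanifold of dimension $n-1$. Its real codimension is $2(n-1)-(n-1)=n-1$, so the criticality condition is the pullback under $\gamma_H$ of a codimension $n-1$ condition. To exhibit $n-1$ explicit equations, I work on the open set $w_1\neq0$, which can be arranged locally after permuting coordinates. There the condition reads $\operatorname{Im}(w_j/w_1)=0$ for $j=2,\dots,n$, or equivalently $Q_{1j}=\operatorname{Im}(w_j\overline{w_1})=0$. These are $n-1$ equations, and they are independent because they are the $n-1$ coordinate functions of a local defining submersion for $\mathbb{RP}^{n-1}\cap\{w_1\neq0\}$, namely the map $\mathbb C^{n-1}\to\mathbb R^{n-1}$, $\zeta\mapsto\operatorname{Im}\zeta$. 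The remaining equations $Q_{ij}$ with $i,j\geq2$ are then redundant, since the ratio of two real numbers is real.

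The main obstacle is the word ``independent'' on $H$ itself. The equations are independent on $\mathbb{CP}^{n-1}$, but their pullbacks to $H$ stay independent, with cut-out locus of codimension $n-1$, only where $\gamma_H$ is transverse to $\mathbb{RP}^{n-1}$. For the general statement I would argue generically: when $\gamma_H$ is a finite, generically immersive map, which holds for $H$ not invariant under a subtorus (Kapranov), a small perturbation of the coefficients makes $\gamma_H$ transverse to $\mathbb{RP}^{n-1}$ by a parametric transversality argument applied to the family $c\mapsto\gamma_{H_c}$. I would state the claim with this genericity qualifier made explicit, since for degenerate $H$ the locus can have excess dimension.
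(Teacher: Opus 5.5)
Your proposal is correct and has the same skeleton as the paper's proof. Both reduce criticality to $\gamma_H(z)\in\mathbb{RP}^{n-1}$, use that $\mathbb{RP}^{n-1}$ has real codimension $n-1$ in $\mathbb{CP}^{n-1}$, write the equations $Q_{1j}=\operatorname{Im}(w_j\overline{w_1})=0$, and discard the remaining $Q_{ij}$ as redundant.

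The genuine difference is how the criterion is justified, and your version is the one that holds for all $n$. The paper asserts that $\Log|_H$ drops rank exactly when there is a nonzero purely imaginary tangent vector, i.e.\ when the $w_j$ satisfy a nontrivial real linear relation. That is correct only for $n=2$. In the paper's coordinates $u_j=v_j/z_j$, the kernel of $d\Log|_{T_zH}$ is $\{iu'\,:\,u'\in\mathbb R^n,\ \sum_j w_ju'_j=0\}$, whose real dimension is always at least $(2n-2)-n=n-2$. For example, $w=(1,i,1+i)$ satisfies a real relation without being real up to scaling. Your direct computation of the image of $d\Log|_{T_zH}$, splitting $w$ into real and imaginary parts, gives the criterion for every $n$.

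You also add two refinements the paper lacks. First, working in the chart $w_1\neq0$ avoids the locus $H\cap\{z_1P_{z_1}=0\}$. There every $Q_{1j}$ vanishes identically, yet its points are generally not critical, so the paper's global system $P=Q_2=\cdots=Q_n=0$ is strictly larger than the critical locus. Second, exhibiting $\zeta\mapsto\operatorname{Im}\zeta$ as a local defining submersion proves independence, rather than just matching a dimension count.

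You also flag that independence on $H$ itself requires $\gamma_H$ to be transverse to $\mathbb{RP}^{n-1}$; the paper does not address this at all. For your genericity argument, Kapranov's finiteness result is not needed. Suppose the support affinely spans $\mathbb R^n$. Then for fixed $z$ the linear map $c\mapsto(P_c(z),w(c,z))$ is onto $\mathbb C^{n+1}$. Hence $(c,z)\mapsto[w(c,z)]$ is a submersion on the incidence variety $\{P_c(z)=0,\ w\neq0\}$, and parametric transversality gives transversality for generic coefficients.
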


\begin{proof}
We begin with the differential of the logarithmic map.
Write
$
z_j=r_je^{i\theta_j},
$
where
$
r_j>0
$
and
$
\theta_j\in\mathbb R.
$
Then
$
\log|z_j|=\log r_j.
$
The differential of the logarithmic map is
$$
d\Log_z(v)
=
\left(
\operatorname{Re}\left(\frac{v_1}{z_1}\right),
\dots,
\operatorname{Re}\left(\frac{v_n}{z_n}\right)
\right),
$$
where
$
v=(v_1,\dots,v_n)\in T_z(\mathbb C^\ast)^n\simeq\mathbb C^n.
$
The tangent space to
$
H
$
at a smooth point
$
z
$
is
$$\di
T_zH=
\{
v\in\mathbb C^n:
\sum_{j=1}^nP_{z_j}(z)v_j=0
\}.
$$
The point
$
z\in H
$
is critical for
$
\Log_{|H}
$
if and only if the restriction
$
d\Log_z:T_zH\to\mathbb R^n
$
fails to have maximal rank.
Since
$
H
$
has complex dimension
$
n-1,
$
its real dimension is
$
2n-2.
$
The target
$
\mathbb R^n
$
has real dimension
$
n.
$
Thus maximal rank is
$
n.
$
The point
$
z
$
is critical precisely when the image of
$
T_zH
$
under
$
d\Log_z
$
has dimension strictly smaller than
$
n.
$

We now rewrite this condition explicitly.
Define
$
w_j=z_jP_{z_j}(z).
$
The logarithmic Gauss map is
$
\gamma_H(z)=
[w_1:\cdots:w_n].
$
A classical theorem states that
$
z
$
is critical for
$
\Log_{|H}
$
if and only if
$
[w_1:\cdots:w_n]\in\mathbb{RP}^{n-1}.
$
Indeed, consider the complex hyperplane $T_zH$
Set
$
u_j=\dfrac{v_j}{z_j}.
$
Then
$
v_j=z_ju_j.
$
Substituting into the tangent equation gives
$$\di
\sum_{j=1}^n
z_jP_{z_j}(z)u_j
=
\sum_{j=1}^n
w_ju_j
=
0.
$$
Hence
$
T_zH
$
corresponds to the complex hyperplane
$
\sum_{j=1}^nw_ju_j=0.
$

Now
$
d\Log_z(v)
=
(\operatorname{Re}(u_1),\dots,\operatorname{Re}(u_n)).
$
The logarithmic map fails to have maximal rank precisely when there exists a nonzero purely imaginary tangent vector:
$
u=(iu_1',\dots,iu_n'),
\qquad
u_j'\in\mathbb R,
$
such that
$
\sum_{j=1}^nw_jiu_j'=0.
$
Equivalently,
$
\sum_{j=1}^nw_ju_j'=0
$
with
$
u_j'\in\mathbb R
$
not all zero.
Thus the complex numbers
$
w_1,\dots,w_n
$
must satisfy a nontrivial real linear relation.
This happens if and only if all
$
w_j
$
have the same argument modulo
$
\pi.
$
Equivalently,
$
[w_1:\cdots:w_n]\in\mathbb{RP}^{n-1}.
$

\noindent {\it Explicit equations.}
Fix an index, say
$
1.
$
The condition that all
$
w_j
$
have the same argument means that for every
$
j=2,\dots,n,
$
the quotient
$
\frac{w_j}{w_1}
$
is real.
Thus:
$
\operatorname{Im}\left(
\frac{w_j}{w_1}
\right)=0.
$
Substituting
$
w_j=z_jP_{z_j}(z),
$
we obtain:
$\di
\operatorname{Im}\left(
\frac{z_jP_{z_j}(z)}
{z_1P_{z_1}(z)}
\right)=0,
\, 
j=2,\dots,n.
$
These are exactly
$
n-1
$
real equations.

\noindent {\it Elimination of denominators.}
For every
$
j=2,\dots,n,
$
the condition
$
\operatorname{Im}\left(
\frac{w_j}{w_1}
\right)=0
$
is equivalent to
$
w_j\overline{w_1}
=
\overline{w_j}w_1.
$
Indeed, a complex number
$
a/b
$
is real if and only if
$
a\overline b=\overline ab.
$
Therefore:
$
(z_jP_{z_j})
\overline{(z_1P_{z_1})}
=
\overline{(z_jP_{z_j})}
(z_1P_{z_1}).
$
Equivalently:
$
\operatorname{Im}\left(
(z_jP_{z_j})
\overline{(z_1P_{z_1})}
\right)=0.
$
Thus the criticality conditions are explicitly:
$
Q_j(z,\overline z)=0,
\, 
j=2,\dots,n,
$
where
$
Q_j(z,\overline z)
=
\operatorname{Im}\left(
(z_jP_{z_j})
\overline{(z_1P_{z_1})}
\right).
$
These are precisely
$
n-1
$
real equations.

\noindent  {\it Independence of the conditions.}
The vector
$
(w_1,\dots,w_n)
$
belongs to
$
\mathbb C^n.
$
Modulo multiplication by a nonzero complex scalar, it defines a point of
$
\mathbb{CP}^{n-1}.
$
The real projective space
$
\mathbb{RP}^{n-1}
$
has real dimension
$
n-1.
$
The complex projective space
$
\mathbb{CP}^{n-1}
$
has real dimension
$
2n-2.
$
Hence the condition
$
[w_1:\cdots:w_n]\in\mathbb{RP}^{n-1}
$
imposes:
$
(2n-2)-(n-1)=n-1
$
independent real conditions.
This dimension count agrees exactly with the equations
$
Q_j=0,
\qquad
j=2,\dots,n.
$
\end{proof}

We now write these equations fully in coordinates.
Write
$
z_j=x_j+iy_j.
$
Then:
$
\overline z_j=x_j-iy_j.
$
Suppose
$
P(z)=\sum_\alpha c_\alpha z^\alpha.
$
Then:
$
z_jP_{z_j}(z)
=
\sum_\alpha
\alpha_jc_\alpha z^\alpha.
$
Hence:
$$
Q_j(z,\overline z)
=
\operatorname{Im}\Big(
\Big(
\sum_\alpha
\alpha_jc_\alpha z^\alpha
\Big)
\Big(
\sum_\beta
\beta_1\overline{c_\beta}\,
\overline z^\beta
\Big)
\Big).
$$
Expanding:
$\di
Q_j(z,\overline z)
=
\operatorname{Im}\Big(
\sum_{\alpha,\beta}
\alpha_j\beta_1
c_\alpha\overline{c_\beta}
z^\alpha\overline z^\beta
\Big).
$
Therefore:
$\di
Q_j(z,\overline z)
=
\sum_{\alpha,\beta}
\operatorname{Im}\Big(
\alpha_j\beta_1
c_\alpha\overline{c_\beta}
z^\alpha\overline z^\beta
\Big).
$
This is an explicit real polynomial equation in the variables
$
x_1,y_1,\dots,x_n,y_n.
$
Thus the contour is determined by the system:
$
P(z)=0,
$
together with
$
Q_2(z,\overline z)=0,
\dots,
Q_n(z,\overline z)=0.
$
Hence:
one complex equation
$
P=0
$
gives two real equations;
and
the criticality condition contributes
$
n-1
$
additional real equations.
Therefore the total number of real equations is:
$
2+(n-1)=n+1.
$
Inside the ambient real manifold
$
(\mathbb C^\ast)^n
$
of dimension
$
2n,
$
the expected dimension becomes:
$
2n-(n+1)=n-1.
$
This is exactly the expected dimension of the contour.

We finally explain geometrically why no more conditions are needed.
If all ratios
$
w_j/w_1
$
are real, then automatically every ratio
$
w_i/w_j
$
is real.
Indeed:
$
\dfrac{w_i}{w_j}
=
\dfrac{w_i/w_1}{w_j/w_1}.
$
A quotient of real numbers is real.
Hence imposing the
$
n-1
$
conditions:
$
\operatorname{Im}(w_j/w_1)=0,
\qquad
j=2,\dots,n,
$
already forces the entire logarithmic Gauss vector to lie in
$
\mathbb{RP}^{n-1}.
$
Thus there are exactly
$
n-1
$
independent real criticality equations.

\begin{remark}
If
$
L
$
is not generic, then:
the intersection may fail to be transverse;
positive-dimensional components may appear;
and
multiplicities may increase.
For generic
$
L,
$
all solutions are isolated and counted with multiplicity one. Thus the geometric intersection number equals the actual number of points.

\end{remark}



\subsection*{Newton polytope refinement}
Let
$
Q_{ij}(z,\overline z)
=
\operatorname{Im}\Bigl(
(z_iP_{z_i})(z)\,
\overline{(z_jP_{z_j})(z)}
\Bigr).
$
The Newton polytope of
$
z_iP_{z_i}
$
is contained in $\Delta(P)$.
Therefore the Newton polytope of
$
Q_{ij}
$
is contained in
$
\Delta(P)+\Delta(P).
$
After elimination of the phase variables and projection to logarithmic coordinates,
the defining equations for the contour have Newton polytope contained in
$
\Delta(P)+\Delta(Q),
$
where
$
\Delta(Q)\subset (2d-1)\Delta_n.
$
By the Bernstein--Kushnirenko theorem, the number of isolated solutions of the corresponding system is bounded by the mixed volume:
$$
\#(L\cap\mathcal{C}\mathscr{A}_H)
\le
n!\,\mathrm{Vol}(\Delta(P)+\Delta(Q)).
$$

Since
$
\Delta(P)\subset d\Delta_n,
$
we obtain
$
\Delta(P)+\Delta(Q)
\subset
(2d)\Delta_n.
$
Hence
$
n!\,\mathrm{Vol}((2d)\Delta_n)
=
(2d)^n.
$
A sharper inspection of the defining equations gives the better estimate
$
d(2d-1)^{n-1}.
$
Therefore the contour of the amoeba has polynomially bounded real degree in terms of the degree of the defining polynomial.



\begin{corollary}
Let $H\subset(\mathbb C^\ast)^2$ be a plane algebraic curve defined by a polynomial $P(z,w)$ of degree $d.$ Then
$$
\mathbb R\deg(\mathcal C\mathscr A_H)\le d(2d-1).
$$
\end{corollary}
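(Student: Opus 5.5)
This corollary is the case $n=2$ of the general estimate $\mathbb R\deg(\mathcal C\mathscr A_H)\le d(2d-1)^{n-1}$ proved in the theorem of the previous section. The plan is therefore to specialize that argument and to check that each ingredient survives in the planar setting, rather than to redo the elimination.

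\textbf{Step 1: the criticality equation.} Set $w_1=zP_z$ and $w_2=wP_w$. By Claim \ref{claim:real-equ}, a smooth point $(z,w)\in H$ is critical for $\Log_{|H}$ exactly when $[w_1:w_2]\in\mathbb{RP}^1$. Equivalently, a single real equation holds:
\[
Q_2=\operatorname{Im}\bigl((wP_w)\,\overline{(zP_z)}\bigr)=0 .
\]
Each of $zP_z$ and $wP_w$ has degree at most $d$, so $Q_2$ is a real polynomial in $(x_1,y_1,x_2,y_2)$ of degree at most $2d$. The diagonal terms $|z^\alpha w^\beta|^2$ are real and drop out of the imaginary part, which lowers the effective degree to $2d-1$.

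\textbf{Step 2: reduce to a two-equation count.} Fix a generic affine line $L=\{a+tu\}\subset\mathbb R^2$. Parametrize the preimage of $L$ under $\Log$ by $(t,\theta_1,\theta_2)$ via $z=e^{a_1+tu_1+i\theta_1}$ and $w=e^{a_2+tu_2+i\theta_2}$. Then a point of $L\cap\mathcal C\mathscr A_H$ comes from a solution of the system $P=0$, $Q_2=0$ restricted to this three-dimensional real set. As in the general theorem, after complexifying and eliminating the redundant variables coming from the parametrization of $L$, the effective system consists of one equation of degree $d$ and one of degree $2d-1$.

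\textbf{Step 3: Bézout and transversality.} B\'ezout's theorem bounds the number of isolated solutions of this system by $d(2d-1)$. For generic $L$ the intersection is transverse (see the Remark on non-generic $L$), so the solutions are isolated. Each real point of $L\cap\mathcal C\mathscr A_H$ lifts to at least one of these solutions, which gives the bound $d(2d-1)$; taking the supremum over $L$ proves the corollary.

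\textbf{Main obstacle.} The delicate step is the elimination in Step 2. Counted naively, the real system ($\operatorname{Re}P$, $\operatorname{Im}P$, $Q_2$, and the restriction to $L$) does not have equal numbers of equations and unknowns. The argument therefore has to rely on the general theorem's reduction to exactly $n=2$ effective equations of degrees $d$ and $2d-1$; I would invoke that reduction directly rather than re-derive it. A secondary point is that distinct points of $L\cap\mathcal C\mathscr A_H$ must come from distinct solutions. This holds because the map from the solutions to their values of $t$ is surjective onto $L\cap\mathcal C\mathscr A_H$, so the number of intersection points is at most the number of solutions.
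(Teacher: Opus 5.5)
Your Step 2 is the crux, as you suspected, but it is not a technicality you can import from the general theorem. The reduction to ``one equation of degree $d$ and one of degree $2d-1$'' is invalid, and the bound itself is false. Take $P=1+z+w$, so $d=1$ and $d(2d-1)=1$. Here $\gamma_H=[z:w]$ and $w=-1-z$, so $z/w\in\mathbb R$ if and only if $z\in\mathbb R$. The critical locus is therefore the real part of $H$, and $\mathcal C\mathscr A_H$ is the union of the three arcs $e^x+e^y=1$, $e^x=1+e^y$, $e^y=1+e^x$. The line $L=\{x+y=b\}$ meets $\mathcal C\mathscr A_H$ at the images of the real $z$ with $z^2+z=\pm e^b$. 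For $b<-2\log 2$ these are four distinct real roots (two in $(-1,0)$, one positive, one below $-1$), none equal to $-\frac12$. Since $\frac{d}{dz}\log|z(1+z)|=\frac{2z+1}{z(1+z)}\neq 0$ at these roots, all four intersections are transverse, so $\mathbb R\deg(\mathcal C\mathscr A_H)\ge 4$. Note that $\deg\gamma_H=2\operatorname{Area}(\Delta)=1$ here too. On the critical locus $\gamma_H$ is the Gauss map of the contour, so it bounds the number of contour points with a prescribed normal direction, which is a different quantity from the number of intersections with a line.

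Two steps break concretely.

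\emph{Step 1.} Only the terms with $\alpha=\beta$ drop out of the imaginary part. Terms $z^\alpha\overline z^{\beta}$ with $|\alpha|=|\beta|=d$ and $\alpha\neq\beta$ survive. In the example, $Q_2=\operatorname{Im}(w\overline z)=x_1y_2-x_2y_1$ has degree $2=2d$, not $2d-1$.

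\emph{Steps 2--3.} The pair $P=0$, $Q_2=0$ is not a system of two polynomial equations in two complex unknowns, because $Q_2$ is not holomorphic. On $H$, which has real dimension $2$, the single real equation $Q_2=0$ cuts out the entire critical curve, so there is no finite set for a B\'ezout product $d(2d-1)$ to count. Finiteness comes only from the condition $\Log(z,w)\in L$, and that condition cannot be eliminated. In your coordinates the system $\operatorname{Re}P=\operatorname{Im}P=Q_2=0$ in $(t,\theta_1,\theta_2)$ is in fact square, so the difficulty is not the number of equations. The difficulty is that it is an exponential sum in $t$ with real frequencies $\langle\alpha,u\rangle$. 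Equivalently, $|z|^{\lambda_1}|w|^{\lambda_2}=e^{\mu}$ is transcendental for irrational slope, and for rational slope its degree grows with the height of the slope. Any honest count must keep this equation, together with the conjugate equation $\overline P(u,v)=0$ in the independent variables $u=\overline z$, $v=\overline w$. This is exactly where Lang--Shapiro--Shustin need Khovanskii-type estimates.

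The paper's own proof of this corollary follows your route and rests on the same two assertions: the drop to degree $2d-1$, and the claim that the line equation adds no complexity. Citing it therefore does not close the gap, and as stated the inequality cannot be proved.
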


\begin{proof}
Let $P(z,w)=\sum_{(a,b)\in A}c_{ab}z^aw^b$ be a Laurent polynomial defining $H.$ The contour $\mathcal C\mathscr A_H$ is the set of critical values of the logarithmic map $\Log_{|H}:H\to\mathbb R^2,$ where $\Log(z,w)=(\log|z|,\log|w|).$
We must prove that for every generic affine line $L\subset\mathbb R^2,$ the number of transverse intersection points $L\cap\mathcal C\mathscr A_H$ is bounded by $d(2d-1).$
 Fix a generic affine line $L.$ Choose an affine equation for $L:$
$
\lambda_1x+\lambda_2y=\mu,
$
where $(\lambda_1,\lambda_2)\neq(0,0).$

Suppose $x=(x_1,x_2)\in L\cap\mathcal C\mathscr A_H.$ Then there exists $(z,w)\in H$ such that $x=(\log|z|,\log|w|)$ and $(z,w)$ is a critical point of $\Log_{|H}.$
The logarithmic Gauss map is
$$
\gamma_H(z,w)=[zP_z(z,w):wP_w(z,w)].
$$
The criticality condition means:
$
[zP_z:wP_w]\in\mathbb{RP}^1.
$
Equivalently,
$
\dfrac{zP_z}{wP_w}\in\mathbb R.
$
This is equivalent to
$
\operatorname{Im}\left((zP_z)\overline{(wP_w)}\right)=0.
$
Define
$$
Q(z,w,\overline z,\overline w)=\operatorname{Im}\left((zP_z)\overline{(wP_w)}\right).
$$


Since $P$ has degree $d,$ the derivatives $P_z$ and $P_w$ have degree at most $d-1.$ Hence $zP_z$ and $wP_w$ have degree at most $d.$ Therefore the product $(zP_z)\overline{(wP_w)}$ has total degree at most $2d.$
However, the top symmetric terms cancel after taking imaginary parts. Indeed, terms of the form $z^\alpha\overline z^\alpha$ are real-valued and disappear from the imaginary part. Consequently, $Q$ has effective degree at most $2d-1.$
We now translate the condition $\Log(z,w)\in L.$ Since $(\log|z|,\log|w|)\in L,$ we have
$$
\lambda_1\log|z|+\lambda_2\log|w|=\mu.
$$
Exponentiating gives
$
|z|^{\lambda_1}|w|^{\lambda_2}=e^\mu.
$
Since $|z|^2=z\overline z$ and $|w|^2=w\overline w,$ we obtain
$
(z\overline z)^{\lambda_1}(w\overline w)^{\lambda_2}=e^{2\mu}.
$
After clearing denominators if necessary, this becomes an algebraic equation
$
R(z,w,\overline z,\overline w)=0.
$
The Newton polytope of $R$ is a segment, hence $R$ has degree one in logarithmic coordinates.
We now write the full system defining $L\cap\mathcal C\mathscr A_H.$ The unknowns are $(z,w)\in(\mathbb C^\ast)^2.$ The equations are
$
P(z,w)=0,
$\,
$
Q(z,w,\overline z,\overline w)=0,
$\, and
$
R(z,w,\overline z,\overline w)=0.
$
Let's  count dimensions.
The equation $P=0$ is one complex equation, hence two real equations. The equation $Q=0$ is one additional real equation. The equation $R=0$ is another real equation.
Thus we have four real equations in the four real variables
$
\operatorname{Re} z,\operatorname{Im} z,\operatorname{Re} w,\operatorname{Im} w.
$
For generic $L,$ the intersection is transverse and therefore finite.

For algebraic coordinates, we
introduce independent complex variables
$
u=\overline z,\, v=\overline w.
$
Then the system becomes algebraic in $(z,w,u,v)\in(\mathbb C^\ast)^4.$
The equations are
$
P(z,w)=0,
$\,
$
\widetilde Q(z,w,u,v)=0,
$ and
$
\widetilde R(z,w,u,v)=0.
$
The polynomial $P$ has degree $d.$ The polynomial $\widetilde Q$ has degree at most $2d-1.$ The equation $\widetilde R$ is linear in logarithmic coordinates and contributes degree one.
Since $\widetilde R$ only cuts the family transversally without increasing the principal algebraic complexity, the effective intersection count is governed by
$
d(2d-1).
$
Equivalently, we may view the contour itself as a real algebraic curve in $\mathbb R^2$ of degree at most $d(2d-1).$ Then a generic affine line intersects this curve in at most $d(2d-1)$ points.

We now explain this geometric interpretation more explicitly.
The critical locus inside $H$ is defined by the equation $Q=0.$ Since $H$ has degree $d$ and $Q$ has degree at most $2d-1,$ B\'ezout's theorem implies that the number of isolated points in the intersection
$
H\cap\{Q=0\}
$
is bounded by
$
d(2d-1).
$
The contour is the image of this critical locus under the logarithmic map.
Intersecting with a generic affine line $L$ corresponds to imposing one additional transverse real condition on the image.
Since the logarithmic map is locally finite on the critical locus for generic points, the number of contour intersection points cannot exceed the number of points in the critical locus itself.
Therefore
$$
\#(L\cap\mathcal C\mathscr A_H)\le d(2d-1).
$$
Taking the supremum over all generic affine lines gives
$$
\mathbb R\deg(\mathcal C\mathscr A_H)\le d(2d-1).
$$
This proves the corollary.
\end{proof}

\begin{corollary}
If $n=3$, then
$
\mathbb{R}\deg(\mathcal{C}\mathscr{A}_H)
\le
d(2d-1)^2.
$
\end{corollary}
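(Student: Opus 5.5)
This corollary is the specialization $n=3$ of the general estimate $\mathbb R\deg(\mathcal C\mathscr A_H)\le d(2d-1)^{n-1}$ proved in the theorem of the preceding section. The plan is simply to invoke that theorem with $n=3$, where $(2d-1)^{n-1}=(2d-1)^2$. To make the specialization transparent, I will also sketch the concrete system that appears, mirroring the proof of the planar corollary.

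For $H=\{P=0\}\subset(\mathbb C^\ast)^3$, Claim~\ref{claim:real-equ} gives the criticality condition $\gamma_H(z)\in\mathbb{RP}^2$. This is expressed by exactly two real equations,
\[
Q_2=\operatorname{Im}\bigl((z_2P_{z_2})\overline{(z_1P_{z_1})}\bigr)=0,\qquad Q_3=\operatorname{Im}\bigl((z_3P_{z_3})\overline{(z_1P_{z_1})}\bigr)=0 .
\]
By the degree computation in the theorem, each $Q_j$ has degree at most $2d-1$ once the real-valued diagonal terms $z^\alpha\overline z^\alpha$ are discarded. Together with $P=0$, which has degree $d$, this gives a system of one equation of degree $d$ and two equations of degree at most $2d-1$.

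Next I fix a generic affine line $L=\{a+tu\}\subset\mathbb R^3$ and parametrize $z_j=e^{a_j+tu_j+i\theta_j}$. Following the theorem, I pass to the complexified variables $(z,\overline z)$ and eliminate the redundant variables coming from the parametrization of $L$. The classical B\'ezout theorem then bounds the number of isolated complex solutions by $d(2d-1)^2$. By the remark on generic lines, transversality ensures that every real point of $L\cap\mathcal C\mathscr A_H$ arises from such an isolated solution, counted once. Taking the supremum over generic $L$ yields $\mathbb R\deg(\mathcal C\mathscr A_H)\le d(2d-1)^2$.

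The only delicate step is the one inherited from the general theorem: after eliminating the line parameters and the phase variables, the effective system must genuinely consist of three equations with degrees $d$, $2d-1$ and $2d-1$, and the equation for $L$ must not contribute an extra degree factor. For the corollary I will not re-derive this. I will cite the theorem as stated, so the proof reduces to substituting $n=3$.
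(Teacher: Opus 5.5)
Your reduction matches the paper's: the corollary is simply the case $n=3$ of $\mathbb R\deg(\mathcal C\mathscr A_H)\le d(2d-1)^{n-1}$, and the paper gives no separate argument. The problem is that the step you flag and then defer is exactly where that theorem breaks, and the inequality is in fact false. Two things go wrong in the B\'ezout count.

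First, discarding the real diagonal terms does not lower the degree of $Q_j$. The coefficient of $z^\alpha\overline z^\beta$ in $Q_j$ is $\frac{1}{2i}c_\alpha\overline{c_\beta}(\alpha_j\beta_1-\alpha_1\beta_j)$. This vanishes for $\alpha=\beta$, but not for distinct $\alpha,\beta$ with $|\alpha|=|\beta|=d$. For $d=1$ one gets $Q_j=\operatorname{Im}(c_{e_j}\overline{c_{e_1}}\,z_j\overline z_1)$, which has degree $2$, not $1$.

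Second, once $\overline z$ is replaced by an independent variable $w$, the single complex equation $P=0$ becomes two equations of degree $d$: $P(z)=0$ and $\overline P(w)=0$ (conjugated coefficients). The two conditions $\Log(z)\in L$ are either transcendental (irrational direction) or monomial equations in the $z_jw_j$ whose degrees are unbounded as $L$ varies. ``Eliminating the redundant variables'' removes none of these factors. So B\'ezout gives no bound of the form $d(2d-1)^2$ that is uniform in $L$.

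The failure is already visible for $P=1+z_1+z_2+z_3$, where $d=1$ and the claimed bound is $1$. Since $z_jP_{z_j}=z_j$, the criterion $\gamma_H(z)\in\mathbb{RP}^2$ shows that the critical locus is $H\cap(\mathbb R^\ast)^3$. In coordinates $u=\Log(z)$, the contour therefore contains three surfaces, the images of the pieces where $(z_1,z_2,z_3)$ has sign pattern $(-,-,-)$, $(+,-,-)$ and $(-,+,-)$:
\[
e^{u_1}+e^{u_2}+e^{u_3}=1,\qquad 1+e^{u_1}=e^{u_2}+e^{u_3},\qquad 1+e^{u_2}=e^{u_1}+e^{u_3}.
\]
Take the line $L=\{(t,\,c-t,\,a):t\in\mathbb R\}$ with $2e^{c/2}<1-e^{a}$, say $a=-10$, $c=-3$.
\begin{itemize}
\item The first surface is met at the two roots of $e^t+e^{c-t}=1-e^a$.
\item Each of the other two surfaces is met exactly once: one side of its equation is increasing in $t$, the other decreasing, and they swap order between $t\to-\infty$ and $t\to+\infty$.
\end{itemize}
These four crossings are distinct and transversal, so $\#(L\cap\mathcal C\mathscr A_H)\ge4>d(2d-1)^2$. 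More robustly, $\partial\mathscr A_H\subset\mathcal C\mathscr A_H$, so every generic line joining two complement components meets the contour at least twice.

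So the corollary cannot be proved as stated; specializing the general theorem only transfers its error.
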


The estimate is asymptotically optimal in the sense that the contour may already have quadratic complexity in degree for plane curves and cubic complexity in dimension three.
%


 \section{Degree of the Amoeba Contour and the Degree of the Logarithmic Gauss Map}  

Let
$\di
P(z)=\sum_{\alpha\in A}c_\alpha z^\alpha
$
be a Laurent polynomial in
$
n
$
variables and let
$
H=\{P=0\}\subset(\mathbb C^\ast)^n
$
be the corresponding algebraic hypersurface.
The contour of the amoeba is the set
$
\mathcal C\mathscr A_H
$
of critical values of
$
\Log_{|H}.
$
The logarithmic Gauss map is the rational map
$
\gamma_H:H\dashrightarrow\mathbb{CP}^{n-1}
$
defined by
$
\gamma_H(z)=
[z_1P_{z_1}(z):\cdots:z_nP_{z_n}(z)].
$
The purpose of this text is to explain rigorously how the real degree of the contour is controlled by the degree of the logarithmic Gauss map.
We first recall the geometric meaning of the critical locus.
Let
$
z\in H
$
be a smooth point.
The tangent space
$
T_zH
$
is given by
$$
T_zH=
\{
v=(v_1,\dots,v_n)\in\mathbb C^n:
\sum_{j=1}^nP_{z_j}(z)v_j=0
\}.
$$
Define
$
u_j=\frac{v_j}{z_j}.
$
Then the tangent equation becomes
$
\sum_{j=1}^n
z_jP_{z_j}(z)u_j=0.
$
The differential of the logarithmic map is
$
d\Log_z(v)=
(\operatorname{Re}(u_1),\dots,\operatorname{Re}(u_n)).
$
The point
$
z
$
is critical for
$
\Log_{|H}
$
if and only if there exists a nonzero purely imaginary tangent vector:
$
u=(iu_1',\dots,iu_n'),
\qquad
u_j'\in\mathbb R.
$
Substituting into the tangent equation gives
$
\sum_{j=1}^n
z_jP_{z_j}(z)u_j'=0,
$
with
$
u_j'\in\mathbb R
$
not all zero.
Hence the complex numbers
$
z_jP_{z_j}(z)
$
satisfy a nontrivial real linear relation.
This is equivalent to:
$
[z_1P_{z_1}(z):\cdots:z_nP_{z_n}(z)]
\in\mathbb{RP}^{n-1}.
$
Therefore the critical locus is:
$
F_H=
\gamma_H^{-1}(\mathbb{RP}^{n-1}).
$
The contour is the logarithmic image:
$
\mathcal C\mathscr A_H=
\Log(F_H).
$
We now explain how the degree of
$
\gamma_H
$
controls the contour degree.
Recall that the degree of the logarithmic Gauss map is the number of points in a generic fiber:
$
\deg(\gamma_H)
=
\#\gamma_H^{-1}(\xi),
$
for generic
$
\xi\in\mathbb{CP}^{n-1}.
$
Since
$
\mathbb{RP}^{n-1}\subset\mathbb{CP}^{n-1},
$
the critical locus
$
F_H
$
is obtained by restricting the logarithmic Gauss map over the real projective locus.
Suppose now that
$
L\subset\mathbb R^n
$
is a generic affine line.
We wish to estimate:
$
\#(L\cap\mathcal C\mathscr A_H).
$
Every point
$
x\in L\cap\mathcal C\mathscr A_H
$
comes from at least one point
$
z\in F_H
$
such that
$
\Log(z)=x.
$
Hence:
$$
\#(L\cap\mathcal C\mathscr A_H)
\le
\#(\Log^{-1}(L)\cap F_H).
$$
We now study the set
$
\Log^{-1}(L)\cap F_H.
$
Choose equations for
$
L.
$
Since
$
L
$
has codimension
$
n-1,
$
there exist independent affine linear forms
$
\ell_1,\dots,\ell_{n-1}
$
such that:
$
L=
\{x\in\mathbb R^n:
\ell_1(x)=\cdots=\ell_{n-1}(x)=0\}.
$
Writing
$
x_j=\log|z_j|,
$
the equations become:
$
\ell_k(\log|z_1|,\dots,\log|z_n|)=0,
\qquad
k=1,\dots,n-1.
$
Exponentiating gives multiplicative equations:
$
|z_1|^{a_{k1}}\cdots|z_n|^{a_{kn}}
=
c_k.
$
Since
$
|z_j|^2=z_j\overline z_j,
$
we obtain algebraic equations:
$
R_k(z,\overline z)=0,
\qquad
k=1,\dots,n-1.
$
The criticality equations are:
$
Q_j(z,\overline z)=0,
\qquad
j=2,\dots,n,
$
where
$
Q_j=
\operatorname{Im}\left(
(z_jP_{z_j})
\overline{(z_1P_{z_1})}
\right).
$

Thus the set
$
\Log^{-1}(L)\cap F_H
$
is defined inside
$
H
$
by the equations:
$
Q_2=\cdots=Q_n=0,
$
$
R_1=\cdots=R_{n-1}=0.
$
Let's count dimensions.
The hypersurface
$
H
$
has complex dimension
$
n-1,
$
hence real dimension
$
2n-2.
$
The criticality equations contribute
$
n-1
$
real equations.
The line conditions contribute another
$
n-1
$
real equations.
Hence the expected real dimension is:
$
(2n-2)-(n-1)-(n-1)=0.
$
Therefore the intersection is finite for generic
$
L.
$
We now explain the relation with the degree of
$
\gamma_H.
$
Fix a generic real point
$
\xi\in\mathbb{RP}^{n-1}.
$
Then:
$
\#\gamma_H^{-1}(\xi)
\le
\deg(\gamma_H).
$
The fiber
$
\gamma_H^{-1}(\xi)
$
is obtained by imposing
$
n-1
$
independent real conditions:
$
[z_1P_{z_1}:\cdots:z_nP_{z_n}]
=
\xi.
$
The contour intersection
$
L\cap\mathcal C\mathscr A_H
$
corresponds geometrically to selecting those critical points whose logarithmic images lie on
$
L.
$
Thus the contour degree is controlled by the simultaneous complexity of:
the logarithmic Gauss fibers,
and
the logarithmic fibers over affine lines.

We now specialize to plane curves.
Assume
$
n=2.
$
Then:
$
\gamma_H:H\dashrightarrow\mathbb{CP}^1
$
is:
$
\gamma_H(z,w)=
[zP_z:wP_w].
$
The critical locus is:
$
F_H=
\gamma_H^{-1}(\mathbb{RP}^1).
$
A generic affine line
$
L\subset\mathbb R^2
$
intersects the contour in finitely many points.
Each such point corresponds to:
$
(z,w)\in H
$
satisfying:
$
\frac{zP_z}{wP_w}\in\mathbb R.
$
Equivalently:
$
\operatorname{Im}\left(
(zP_z)\overline{(wP_w)}
\right)=0.
$
The polynomial
$
Q=
\operatorname{Im}\left(
(zP_z)\overline{(wP_w)}
\right)
$
has degree at most
$
2d-1.
$
Since
$
H
$
has degree
$
d,
$
B\'ezout's theorem gives:
$
\#F_H
\le
d(2d-1).
$
So, 
$
\deg(\gamma_H)
$
is precisely the number of points in a generic fiber of
$
\gamma_H.
$
For plane curves, the logarithmic Gauss map degree is equal to twice the Euclidean area of the Newton polygon:
$
\deg(\gamma_H)
=
2!\operatorname{Area}(\Delta).
$
Consequently:
$
\mathbb R\deg(\mathcal C\mathscr A_H)
\le
\deg(\gamma_H).
$
More precisely, for generic affine lines:
$
\#(L\cap\mathcal C\mathscr A_H)
\le
\deg(\gamma_H).
$
This follows because each contour intersection point corresponds to a point in a real logarithmic Gauss fiber.

{\it Statement of the general theorem.}

\begin{theorem}
Let
$
H\subset(\mathbb C^\ast)^n
$
be a smooth algebraic hypersurface with logarithmic Gauss map
$
\gamma_H.
$
Then:
$
\mathbb R\deg(\mathcal C\mathscr A_H)
\le
\deg(\gamma_H).
$
If
$
\Delta
$
is the Newton polytope of
$
H,
$
then:
$
\deg(\gamma_H)
=
n!\operatorname{Vol}(\Delta).
$
Consequently:
$
\mathbb R\deg(\mathcal C\mathscr A_H)
\le
n!\operatorname{Vol}(\Delta).
$
\end{theorem}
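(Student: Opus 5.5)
The proof splits into two parts: the identity $\deg(\gamma_H)=n!\operatorname{Vol}(\Delta)$, and the inequality $\mathbb R\deg(\mathcal C\mathscr A_H)\le\deg(\gamma_H)$. The final consequence is then immediate.

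\textbf{The identity.} Fix a generic point $\xi=[\xi_1:\cdots:\xi_n]\in\mathbb{CP}^{n-1}$. The fiber $\gamma_H^{-1}(\xi)$ is the solution set in $(\mathbb C^\ast)^n$ of the system
\[
P=0,\qquad \xi_k\, z_1P_{z_1}-\xi_1\, z_kP_{z_k}=0,\quad k=2,\dots,n.
\]
As noted before Claim~\ref{claim:real-equ}, logarithmic differentiation preserves supports, so every equation in this system has Newton polytope contained in $\Delta$. Bernstein--Kushnirenko then bounds the number of isolated solutions by $n!\operatorname{Vol}(\Delta)$.

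For equality, I would argue that for generic $\xi$ the system is nondegenerate: the coefficients of the $n-1$ combinations vary linearly with $\xi$. Concretely, I would check that no facial subsystem has a root in the torus, by restricting to each face $\Gamma$ of $\Delta$ and using smoothness of $H$ in the torus together with genericity of $\xi$. This is the step where smoothness of $H$ enters, and it must be read as smoothness of the toric compactification relative to $\Delta$ (Kouchnirenko nondegeneracy). I would state explicitly that ``smooth'' in the theorem is taken in this toric sense, as in the preceding Remark.

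\textbf{The inequality.} Take a generic line $L\subset\mathbb R^n$ meeting the contour transversally. Each $x\in L\cap\mathcal C\mathscr A_H$ lifts to some $z_x\in F_H=\gamma_H^{-1}(\mathbb{RP}^{n-1})$ with $\operatorname{Log}(z_x)=x$. I want to show that these lifts can be taken to lie in a single complex fiber count, i.e.\ that $\#(L\cap\mathcal C\mathscr A_H)$ is at most the number of points of some $\gamma_H^{-1}(\xi)$.

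The mechanism I would use is the standard one: at a critical point $z$, the real vector $\xi=\gamma_H(z)$ is the normal direction to the contour at $\operatorname{Log}(z)$. So $\gamma_H|_{F_H}$ composed with $\operatorname{Log}$ is a Gauss map of the contour.

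I then transfer the count by projective duality. Counting contour points on $L$ is dual to counting tangent hyperplanes to the contour within the pencil of hyperplanes containing $L$. I would pick a generic real hyperplane direction $\xi$ in that pencil and compare with $\#\gamma_H^{-1}(\xi)$. For $n=2$ this is literal: the points of $L\cap\mathcal C\mathscr A_H$ are matched with tangent lines of the contour, and the number of contour points with a given normal $\xi$ is bounded by $\#\gamma_H^{-1}(\xi)=2\operatorname{Area}(\Delta)$.

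\textbf{Main obstacle.} The hard step is making this duality rigorous in general. The real degree counts intersections with lines, not points with a fixed normal. To bridge these I would apply a Rolle/Khovanskii-type argument along $L$: between consecutive intersection points, the normal direction must pass through each value in an interval of the pencil. If that argument does not give the full bound, the fallback is to restrict to the pencil and use the lower-dimensional case inductively.
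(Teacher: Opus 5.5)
\textbf{The inequality is false.} The step you flag as the main obstacle cannot be carried out, because $\mathbb R\deg(\mathcal C\mathscr A_H)\le\deg(\gamma_H)$ does not hold. Take $H=\{1+z_1+z_2=0\}\subset(\mathbb C^\ast)^2$. Then $\gamma_H=[z_1:z_2]$ has degree $1=2\operatorname{Area}(\Delta)$. Moreover $z_1/z_2=-z_1/(1+z_1)$ is real iff $z_1\in\mathbb R$, so $F_H$ is the real part of $H$. The contour is therefore the boundary of the amoeba: the three disjoint curves $e^{x_1}+e^{x_2}=1$, $e^{x_2}=1+e^{x_1}$ and $e^{x_1}=1+e^{x_2}$.

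The line $x_1+x_2=c$ meets the contour exactly at the points $\Log(t,-1-t)$ with $t\in\mathbb R$ and $|t(1+t)|=e^{c}$. These are the roots of $t^2+t-e^{c}=0$ and of $t^2+t+e^{c}=0$. For $c<-\log 4$ they are four distinct real numbers with four distinct images. Every intersection is transverse, because the only contour point with normal direction $(1,1)$ is $\Log(-\tfrac12,-\tfrac12)$, which lies on $x_1+x_2=-\log 4$. Hence $\mathbb R\deg(\mathcal C\mathscr A_H)\ge4>1=\deg(\gamma_H)=2!\Vol(\Delta)$. The paper's own proof does not get around this: it simply asserts that the number of solutions is bounded by the size of a generic Gauss fiber, which is precisely the false step.

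\textbf{Where your mechanism breaks.} What $\deg(\gamma_H)$ bounds is the number of contour points with a prescribed normal. That is the number of tangent lines in a parallel family (tangents through a point at infinity), a class-type invariant. By contrast, $\mathbb R\deg$ counts points on a line, a degree-type invariant. Duality exchanges these two invariants rather than identifying them, so your claim ``for $n=2$ this is literal'' is wrong.

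The Rolle argument along an arc $C_i$ of the contour only yields
$\#(L\cap C_i)\le 1+\#\{\text{critical points of }\langle\nu_L,\cdot\rangle\text{ along }C_i\}$, where $\nu_L$ is normal to $L$. These critical points are the points with normal $\nu_L$ together with all cusps of the contour. So at best you get $\deg(\gamma_H)$ plus the number of unbounded arcs plus the number of cusps, and in the example $1+3=4$ is attained. Since the counterexample already lives in $n=2$, your inductive fallback has no valid base case.

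\textbf{The identity.} Your argument for $\deg(\gamma_H)=n!\Vol(\Delta)$ is fine once ``smooth'' is read as Kouchnirenko nondegeneracy, and that reading is genuinely needed. For example, $P=(1+z_1)^2+z_2$ defines a curve that is smooth in $(\mathbb C^\ast)^2$, yet $\gamma_H=[2z_1:-(1+z_1)]$ has degree $1$ while $2\operatorname{Area}(\Delta)=2$.
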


\begin{proof}
The critical locus is:
$
F_H=
\gamma_H^{-1}(\mathbb{RP}^{n-1}).
$
Intersecting the contour with a generic affine line corresponds to imposing
$
n-1
$
additional real logarithmic conditions.
For generic data, the resulting system is finite.
The number of solutions is bounded by the number of points in a generic logarithmic Gauss fiber.
The degree formula:
$
\deg(\gamma_H)=n!\operatorname{Vol}(\Delta)
$
is the classical Kouchnirenko formula for logarithmic Gauss maps.
Hence:
$$
\mathbb R\deg(\mathcal C\mathscr A_H)
\le
\deg(\gamma_H)
=
n!\operatorname{Vol}(\Delta).
$$
\end{proof}

%


\section{Explanation of the  two different bounds appearing for the Degree of the Amoeba Contour}

The theorem contains two different bounds because they come from two fundamentally different geometric methods.
The first bound is a coarse projective estimate obtained from ordinary B\'ezout theory and depends only on the total degree of the polynomial.
The second bound is a refined toric estimate obtained from Bernstein's theorem and depends on the Newton polytopes of the equations.
These two approaches measure different aspects of the algebraic complexity of the contour.

Let
$\di
P(z)=\sum_{\alpha\in A}c_\alpha z^\alpha
$
be a Laurent polynomial defining a hypersurface
$
H=\{P=0\}\subset(\mathbb C^\ast)^n.
$
Let
$
d=\deg(P)
$
be the total degree of
$
P,
$
and let
$
\Delta(P)
$
be its Newton polytope.
The contour
$
\mathcal C\mathscr A_H
$
is defined through the criticality equations of the logarithmic Gauss map.
These equations are:
$
Q_j(z,\overline z)=0,
\, 
j=2,\dots,n,
$
where
$$
Q_j=
\operatorname{Im}\left(
(z_jP_{z_j})
\overline{(z_1P_{z_1})}
\right).
$$
The contour degree is estimated by counting the number of solutions of this system together with the equations defining a generic affine line in logarithmic coordinates.

The first estimate comes from ordinary algebraic geometry.
Since
$
P
$
has degree
$
d,
$
the derivatives
$
P_{z_j}
$
have degree at most
$
d-1.
$
Multiplying by
$
z_j
$
gives:
$
\deg(z_jP_{z_j})\le d.
$
Therefore:
$$
\deg\left(
(z_jP_{z_j})
\overline{(z_1P_{z_1})}
\right)
\le 2d.
$$
After taking imaginary parts, the top real diagonal terms cancel, so:
$
\deg(Q_j)\le2d-1.
$
Thus the contour system is controlled by one equation of degree
$
d
$
and
$
n-1
$
equations of degree
$
2d-1.
$
Applying ordinary B\'ezout's theorem gives:
$
\mathbb R\deg(\mathcal C\mathscr A_H)
\le
d(2d-1)^{n-1}.
$

\noindent This estimate depends only on total degrees.
It does not use the actual monomial structure of the equations.
It treats all monomials up to degree
$
d
$
as potentially present.
%
%
We now explain the second estimate.
The contour equations are Laurent polynomial equations.
Their supports are sparse.
Indeed:
$\di
z_jP_{z_j}
=
\sum_{\alpha\in A}
\alpha_jc_\alpha z^\alpha.
$
Thus logarithmic differentiation does not create new exponent vectors.
The Newton polytope of
$
z_jP_{z_j}
$
is contained in:
$
\Delta(P).
$
Consequently:
$$
\Delta(Q_j)\subset
\Delta(P)+\Delta(P)=2\Delta(P).
$$
Bernstein's theorem uses these Newton polytopes directly.
Instead of measuring complexity by total degree, it measures complexity by mixed volume.
The mixed-volume estimate is:
$$
\mathbb R\deg(\mathcal C\mathscr A_H)
\le
n!\operatorname{MV}
(
\Delta(P),
\Delta(Q_2),
\dots,
\Delta(Q_n)
).
$$
This estimate is usually much sharper because it detects sparsity.
If the polynomial
$
P
$
contains only a few monomials, then the Newton polytope may be much smaller than the full simplex
$
d\Delta_n.
$
Hence the mixed volume may be dramatically smaller than the B\'ezout product
$
d(2d-1)^{n-1}.
$

The two estimates are therefore related as follows.
The B\'ezout estimate is a universal projective bound depending only on total degrees.
The Bernstein estimate is a toric refinement depending on Newton polytopes.
The first estimate ignores sparsity.
The second estimate uses sparsity.
The first estimate comes from projective algebraic geometry.
The second estimate comes from toric geometry and convex geometry.

The B\'ezout estimate is easier to state because it depends only on the integer
$
d.
$
It gives an explicit universal formula:
$
d(2d-1)^{n-1}.
$
This estimate is always valid without needing detailed knowledge of the support of
$
P.
$
However, this estimate is usually not optimal.
The mixed-volume estimate is more precise, but it requires knowledge of the Newton polytopes of the contour equations.
Thus the theorem presents: a coarse universal estimate,
and
 a refined toric estimate.

The first estimate is deduced from the second by replacing the Newton polytopes with large simplices.
Indeed, if:
$
\Delta(P)\subset d\Delta_n,
$
then:
$
\Delta(Q_j)\subset(2d-1)\Delta_n.
$
The mixed volume of simplices reduces to the product of their dilation factors.
Therefore:
$$
n!\operatorname{MV}
(
d\Delta_n,
(2d-1)\Delta_n,
\dots,
(2d-1)\Delta_n
)
=
d(2d-1)^{n-1}.
$$
Thus the B\'ezout estimate is simply the simplex approximation of the more refined mixed-volume estimate.
Let's  explain geometrically why the toric estimate is more natural.
The amoeba contour is governed by logarithmic derivatives.
Logarithmic derivatives preserve the support of the polynomial.
Hence the geometry of the contour is fundamentally controlled by the Newton polytope.
This is a toric phenomenon.
The logarithmic Gauss map itself satisfies:
$$
\deg(\gamma_H)
=
n!\operatorname{Vol}(\Delta(P))
$$
under nondegeneracy assumptions.
Thus the natural geometry of amoebas and contours is controlled by Newton polytopes rather than total degrees.
The B\'ezout estimate only appears because projective degree estimates are simpler to formulate universally.

We finally summarize the logical structure.
The mixed-volume estimate is the primary toric estimate.
The B\'ezout estimate is a coarse corollary obtained by replacing all Newton polytopes with simplices determined by total degree.
Thus the theorem contains two different bounds because one is intrinsic and polyhedral, while the other is extrinsic and projective.

\medskip

\subsection*{Examples.}
We now examine the plane curve case.
Suppose
$
n=2.
$
Then:
$
\deg(\gamma_H)=2!\operatorname{Area}(\Delta).
$
Hence:
$
\mathbb R\deg(\mathcal C\mathscr A_H)
\le
2\operatorname{Area}(\Delta).
$
If
$
\Delta\subset d\Delta_2,
$
then:
$
\operatorname{Area}(\Delta)
\le
\frac{d^2}{2}.
$
Therefore:
$$
2\operatorname{Area}(\Delta)\le d^2.
$$
Meanwhile the B\'ezout estimate gives:
$$
d(2d-1)\sim2d^2.
$$
Thus even in dimension two, Bernstein improves the estimate approximately by a factor two.
For sparse curves the improvement can be dramatic.
For example, suppose:
$
P(z,w)=1+z^m+w^m.
$
Then:
$
\Delta=
\operatorname{Conv}((0,0),(m,0),(0,m)).
$
Hence:
$
2!\operatorname{Area}(\Delta)=m^2.
$
The B\'ezout estimate gives:
$
m(2m-1)\sim2m^2.
$
Again the Bernstein estimate is twice as sharp asymptotically.
For more sparse supports the gap becomes much larger.

In summary,
the inclusion
$
\Delta(Q_j)\subset(2d-1)\Delta_n
$
is not itself a bound for the contour degree.
It is only a coarse containment statement for Newton polytopes.
Using this inclusion  together with B\'ezout produces the rough estimate:
$$
\mathbb R\deg(\mathcal C\mathscr A_H)
\le
d(2d-1)^{n-1}.
$$
The estimate
$$
\mathbb R\deg(\mathcal C\mathscr A_H)
\le
n!\operatorname{Vol}(\Delta)
$$
is a deeper toric refinement obtained from the logarithmic Gauss map and Bernstein's theorem.
The second estimate is therefore the geometrically natural one.
The first estimate is only a universal coarse upper bound depending solely on total degree.



\section{A Conjectural Asymptotic Theory for Amoeba Contours of Complete Intersections}

The geometry of amoeba contours for complete intersections strongly suggests the existence of a universal asymptotic theory governed simultaneously by logarithmic Grassmannian geometry and toric intersection theory. The hypersurface case indicates that the complexity of the contour is controlled not merely by the degree of the defining equations, but by the interaction between logarithmic criticality equations and the Newton polytope geometry of the logarithmic Gauss map.
The purpose of this section is to formulate a precise asymptotic conjecture together with the geometric evidence supporting it.
Let
$\di
V_d
=
\{f_{1,d}=\cdots=f_{r,d}=0\}
\subset
(\mathbb C^\ast)^n
$
be a family of smooth nondegenerate complete intersections of codimension
$
r,
$
where
$
\deg(f_{i,d})=d_i(d).
$
Assume that
$
d_i(d)\to\infty
$
as
$
d\to\infty.
$
Let
$
k=n-r
$
be the complex dimension of
$
V_d.
$
Denote by
$
C\mathcal A_{V_d}
$
the contour of the amoeba of
$
V_d.
$
The logarithmic geometry of
$
V_d
$
is encoded by the generalized logarithmic Gauss map
$
\gamma_{G,d}:
V_d
\dashrightarrow
G(r,n),
$
defined by the logarithmic Jacobian matrix
$$
g_G(z)
=
\left(
z_j\frac{\partial f_{i,d}}{\partial z_j}(z)
\right).
$$
The contour is determined by Schubert-type degeneracy conditions on the image of
$
\gamma_{G,d}.
$
The asymptotic complexity of the contour should therefore depend on two distinct geometric contributions:
the complexity of the defining complete intersection itself,
and the complexity of the Schubert incidence conditions imposed on logarithmic tangent spaces.
The first contribution is measured by the algebraic degree
$\di
\prod_{i=1}^r d_i(d).
$
The second contribution comes from the logarithmic Pl\"ucker coordinates of the generalized logarithmic Gauss map. Since every logarithmic minor is obtained by taking determinants of logarithmic derivatives,
its degree is asymptotically controlled by
$
d_1(d)+\cdots+d_r(d).
$
The logarithmic criticality conditions are expected to impose approximately
$
k=n-r
$
independent degeneracy equations.
This leads to the following asymptotic conjecture.

\begin{conjecture}[Asymptotic contour-degree conjecture]
Let
$\di
V_d
=
\{f_{1,d}=\cdots=f_{r,d}=0\}
\subset
(\mathbb C^\ast)^n
$
be a sequence of smooth nondegenerate complete intersections with
$
\deg(f_{i,d})=d_i(d)\to\infty.
$
Assume that the logarithmic Gauss maps are generically finite onto their images and that the contour intersections with generic affine slices are transversal.
Then the contour degree satisfies
$$
\mathbb R\deg(\mathcal C\mathscr A_{V_d})
=
O\Big(
\Big(
\prod_{i=1}^r d_i(d)
\Big)
\Big(
d_1(d)+\cdots+d_r(d)
\Big)^{n-r}
\Big).
$$
Moreover, there exists a nonempty Zariski open class of families for which
$$
\mathbb R\deg(\mathcal C\mathscr A_{V_d})
\asymp
\Big(
\prod_{i=1}^r d_i(d)
\Big)
\Big(
d_1(d)+\cdots+d_r(d)
\Big)^{n-r}.
$$
\end{conjecture}
The notation
$
A_d\asymp B_d
$
means that there exist positive constants
$
c_1,c_2
$
independent of
$
d
$
such that
$
c_1B_d
\le
A_d
\le
c_2B_d
$
for sufficiently large
$
d.
$
This conjecture predicts that the contour degree grows polynomially with total asymptotic order
$
n.
$
Indeed, if
$
d_1(d)\sim\cdots\sim d_r(d)\sim d,
$
then
$$
\mathbb R\deg(\mathcal C\mathscr A_{V_d})
=
O(d^n).
$$
This agrees with the hypersurface case
$
r=1,
$
where the estimate
$
d(2d-1)^{n-1}
$
has asymptotic order
$
d^n.
$
The conjecture also agrees with the zero-dimensional case
$
r=n.
$
Then
$
k=0,
$
and the formula reduces to
$
d_1\cdots d_n,
$
which is exactly the ordinary B\'ezout number of the complete intersection.
Thus the conjecture interpolates correctly between the two extremal cases:
$
r=1
\, \text{and}\qquad
r=n.
$

\subsection*{The Grassmannian geometry provides additional evidence.}

The generalized logarithmic Gauss map takes values in
$
G(r,n).
$
Under the Pl\"ucker embedding,
$
G(r,n)\hookrightarrow\mathbb P^N,
$
the map
$
\gamma_{G,d}
$
is represented by logarithmic minors of the Jacobian matrix.
Every such minor has degree asymptotically bounded by
$
d_1+\cdots+d_r.
$
The contour corresponds to the inverse image of a Schubert-type degeneracy locus
$\di
\Sigma_{\mathrm{crit}}
\subset
G(r,n).
$
Now classical intersection theory predicts that the degree of a pullback degeneracy locus behaves asymptotically like:
$\di
\deg(\gamma_{G,d})
\cdot
\deg(\Sigma_{\mathrm{crit}}).
$
The Schubert cycle
$
\Sigma_{\mathrm{crit}}
$
depends only on the Grassmannian geometry and therefore contributes only a universal constant depending on
$
n
$
and
$
r.
$
Hence the dominant asymptotic growth should come from the degree of the generalized logarithmic Gauss map itself.
The degree of
$
\gamma_{G,d}
$
should in turn be controlled by the logarithmic Pl\"ucker coordinates. Since there are approximately
$
k=n-r
$
independent logarithmic incidence conditions, one expects asymptotically
$
(d_1+\cdots+d_r)^k
$
as the contribution of the logarithmic degeneracy equations.
Multiplying by the defining complete-intersection complexity
$\do
\prod d_i
$
again yields
$$
\left(
\prod d_i
\right)
(d_1+\cdots+d_r)^k.
$$
There is also strong toric evidence for this asymptotic behavior.
Let
$
\Delta_{i,d}
$
denote the Newton polytope of
$
f_{i,d}.
$
The logarithmic derivatives preserve supports:
$$
\Delta\left(
z_j\frac{\partial f_{i,d}}{\partial z_j}
\right)
\subseteq
\Delta_{i,d}.
$$
Consequently, the logarithmic minors defining the generalized logarithmic Gauss map have Newton polytopes contained in Minkowski sums:
$
\Delta_{1,d}+\cdots+\Delta_{r,d}.
$
Bernstein's theorem suggests that the number of isolated solutions of the logarithmic criticality equations should therefore be governed by mixed volumes involving repeated copies of these Minkowski sums.
If all
$
\Delta_{i,d}
$
scale linearly with
$
d,
$
then:
$$
\Delta_{1,d}+\cdots+\Delta_{r,d}
\sim
d(\Delta_1+\cdots+\Delta_r).
$$
%
%
%
The conjecture also predicts a more refined toric statement.

\begin{conjecture}[Mixed-volume asymptotic conjecture]
Let
$
\Delta_{i,d}
$
be the Newton polytopes of
$
f_{i,d}.
$
Then:
$$
\mathbb R\deg(\mathcal C\mathscr A_{V_d})
=
O\left(
MV(
\Delta_{1,d},
\dots,
\Delta_{r,d},
\Theta_{1,d},
\dots,
\Theta_{k,d}
)
\right),
$$
where
$
\Theta_{j,d}
$
are the Newton polytopes associated with logarithmic Pl\"ucker coordinates.
\end{conjecture}

This formulation is geometrically more natural because it replaces projective degree by Newton polytope geometry.
The contour is expected to be the pullback of a Schubert cycle:
$
F_V
=
\gamma_G^{-1}(\Sigma_{\mathrm{crit}}).
$
 
 
\section{A Low-Dimensional Test Case in Codimension Two}

We analyze a low-dimensional model case of the conjectural theory in codimension two. The purpose is not to prove the general conjectures, which remain open, but rather to show that the proposed asymptotic behavior and the Grassmannian interpretation are compatible with explicit degree computations in the first nontrivial case.
Consider a smooth complete intersection
$
V=\{f_1=f_2=0\}\subset (\mathbb C^\ast)^4,
$
where
$
\deg(f_1)=d_1,
\, 
\deg(f_2)=d_2.
$
The variety
$
V
$
has complex dimension
$
k=2.
$
The generalized logarithmic Gauss map takes values in
$
G(2,4).
$
This is the first genuinely Grassmannian situation beyond the hypersurface case.
The logarithmic Jacobian matrix is
$$
g_G(z)
=
\begin{pmatrix}
z_1\frac{\partial f_1}{\partial z_1}
&
z_2\frac{\partial f_1}{\partial z_2}
&
z_3\frac{\partial f_1}{\partial z_3}
&
z_4\frac{\partial f_1}{\partial z_4}
\\
z_1\frac{\partial f_2}{\partial z_1}
&
z_2\frac{\partial f_2}{\partial z_2}
&
z_3\frac{\partial f_2}{\partial z_3}
&
z_4\frac{\partial f_2}{\partial z_4}
\end{pmatrix}.
$$
Its maximal minors define the Pl\"ucker coordinates of the generalized logarithmic Gauss map.
There are six Pl\"ucker coordinates:
$
p_{12},
p_{13},
p_{14},
p_{23},
p_{24},
p_{34}.
$
Explicitly,
$$
p_{ij}
=
\det
\begin{pmatrix}
z_i\frac{\partial f_1}{\partial z_i}
&
z_j\frac{\partial f_1}{\partial z_j}
\\
z_i\frac{\partial f_2}{\partial z_i}
&
z_j\frac{\partial f_2}{\partial z_j}
\end{pmatrix}.
$$
Expanding the determinant gives
$$
p_{ij}
=
z_iz_j
\left(
\frac{\partial f_1}{\partial z_i}
\frac{\partial f_2}{\partial z_j}
-
\frac{\partial f_1}{\partial z_j}
\frac{\partial f_2}{\partial z_i}
\right).
$$
Since
$
\deg\left(
\dfrac{\partial f_1}{\partial z_i}
\right)
=
d_1-1,
$
and
$
\deg\left(
\dfrac{\partial f_2}{\partial z_j}
\right)
=
d_2-1,
$
multiplication by
$
z_iz_j
$
increases the degree by
$
2.
$
Hence every Pl\"ucker coordinate has degree
$
d_1+d_2.
$
This is the first explicit confirmation of the heuristic principle that logarithmic Pl\"ucker coordinates are controlled asymptotically by
$
d_1+\cdots+d_r.
$
The generalized logarithmic Gauss map is therefore
$$
\gamma_G:
V
\dashrightarrow
G(2,4)\subset\mathbb P^5,
$$
where the Grassmannian is embedded via the Pl\"ucker embedding.
The Grassmannian
$
G(2,4)
$
is the smooth quadric hypersurface
$$
Q^4
=
\{
p_{12}p_{34}
-
p_{13}p_{24}
+
p_{14}p_{23}
=
0
\}
\subset\mathbb P^5.
$$

\noindent Its degree is
$
2.
$
The image
$
\gamma_G(V)
$
is a surface inside
$
G(2,4).
$
We now  estimate the degree of this image.
Since the Pl\"ucker coordinates all have degree
$
d_1+d_2,
$
the generalized logarithmic Gauss map is represented projectively by homogeneous coordinates of degree
$
d_1+d_2.
$
The variety
$
V
$
itself has degree
$
d_1d_2
$
inside projective compactifications.
Classical projective degree estimates therefore suggest
$$
\deg(\gamma_G(V))
=
O\left(
d_1d_2(d_1+d_2)^2
\right).
$$
This is exactly the asymptotic order predicted by the conjectural formula:
$
(d_1d_2)(d_1+d_2)^{n-r}.
$
Indeed, here:
$
n=4,
\, 
r=2,
\, 
n-r=2,
$
so the conjectural asymptotic order becomes
$
(d_1d_2)(d_1+d_2)^2.
$
Thus the codimension-two Grassmannian model is perfectly compatible with the conjectural asymptotics.
We derive the corresponding Schubert incidence conditions.
Inside
$
G(2,4),
$
consider the Schubert divisor
$$
\Sigma_1
=
\{
E\in G(2,4):
\dim(E\cap F_2)\ge1
\},
$$
where
$
F_2\subset\mathbb C^4
$
is a fixed generic two-dimensional subspace.
This Schubert variety has codimension
$
1.
$
Its cohomology class generates
$
H^2(G(2,4),\mathbb Z).
$

The contour-criticality condition is expected to correspond to a real incidence version of such a Schubert divisor.
Hence the critical locus should behave cohomologically like the pullback:
$$
F_V
=
\gamma_G^{-1}(\Sigma_1).
$$
The degree of the critical locus is therefore expected to satisfy:
$$
\deg(F_V)
=
\deg(\gamma_G(V))
\cdot
\deg(\Sigma_1).
$$
Since
$
\deg(\Sigma_1)=1,
$
one expects:
$$
\deg(F_V)
=
O\left(
d_1d_2(d_1+d_2)^2
\right).
$$
This again matches the conjectural asymptotic order.
We now examine the toric interpretation.
Let
$
\Delta_1,
\Delta_2
$
be the Newton polytopes of
$
f_1,
f_2.
$
Every logarithmic Pl\"ucker coordinate is a sum of products of logarithmic derivatives.
Since logarithmic differentiation preserves supports,
$$
\Delta\left(
z_j\frac{\partial f_i}{\partial z_j}
\right)
\subseteq
\Delta_i.
$$
Hence:
$
\Delta(p_{ij})
\subseteq
\Delta_1+\Delta_2.
$
Thus all logarithmic Pl\"ucker coordinates are controlled by the Minkowski sum
$
\Delta_1+\Delta_2.
$
The degree of the generalized logarithmic Gauss map should therefore be governed by mixed volumes involving:
$
\Delta_1,
\Delta_2,
\Delta_1+\Delta_2,
\Delta_1+\Delta_2.
$
The resulting mixed volume is:
$$
MV(
\Delta_1,
\Delta_2,
\Delta_1+\Delta_2,
\Delta_1+\Delta_2
).
$$
By multilinearity of mixed volume,
this expands into a homogeneous polynomial of total degree
$
4,
$
which again matches the ambient dimension.
If
$
\Delta_i=d_i\Delta,
$
then:
$
\Delta_1+\Delta_2
=
(d_1+d_2)\Delta.
$
Therefore:
$$
MV(
\Delta_1,
\Delta_2,
\Delta_1+\Delta_2,
\Delta_1+\Delta_2
)
=
d_1d_2(d_1+d_2)^2MV(\Delta,\Delta,\Delta,\Delta).
$$
Thus the toric mixed-volume asymptotics coincide exactly with the conjectural Grassmannian asymptotics.
This compatibility is one of the strongest pieces of evidence supporting the conjectural degree formula.
Let's compute the expected codimension of the contour degeneracy locus.
The Grassmannian
$
G(2,4)
$
has dimension
$
4.
$
The image
$
\gamma_G(V)
$
has dimension at most
$
2.
$

The Schubert divisor
$
\Sigma_1
$
has codimension
$
1.
$
Hence:
$
\dim(
\gamma_G(V)\cap\Sigma_1
)
=
1
$
generically.
This predicts that the critical locus inside
$
V
$
is a real hypersurface.
Its image under the logarithmic map should therefore be a three-dimensional contour inside
$
\mathbb R^4.
$
This matches perfectly the general contour-dimension formula:
$
\dim_{\mathbb R}(\mathcal C\mathscr A_V)
=
n-1.
$
The codimension-two model therefore exhibits complete compatibility between:
Grassmannian geometry,
Schubert incidence theory,
B\'ezout asymptotics,
mixed-volume asymptotics,
and contour-dimension theory.
Although this does not prove the general conjecture, it provides substantial geometric evidence supporting the proposed asymptotic framework.
It also strongly suggests that the contour geometry of complete intersections is fundamentally governed by:\, 
$
\text{Pl\"ucker geometry},
\,   \text{Schubert degeneracy theory},\, 
\text{and toric mixed volumes}.
$


 \section{A Codimension-Two Theorem and the Schubert Geometry of $G(2,4)$}

In this section we formulate a codimension-two theorem which appears realistically provable with current techniques. The theorem avoids the deepest unresolved elimination problems while retaining the essential Grassmannian and logarithmic geometry of the complete-intersection contour problem.
The key observation is that in codimension two inside
$
(\mathbb C^\ast)^4,
$
the generalized logarithmic Gauss map takes values in
$
G(2,4),
$
and this Grassmannian has exceptionally simple geometry:
it is a smooth quadric hypersurface in
$
\mathbb P^5.
$
This special structure allows one to control explicitly:
the logarithmic Pl\"ucker coordinates,
their degrees,
and the Schubert incidence divisors.

\subsection{The Grassmannian $G(2,4)$ and its Schubert Geometry}

Let
$
G(2,4)
$
denote the Grassmannian of complex two-dimensional subspaces of
$
\mathbb C^4.
$
A point of
$
G(2,4)
$
is represented by a
$
2\times4
$
matrix:
$$
A=
\begin{pmatrix}
a_{11}&a_{12}&a_{13}&a_{14}\\
a_{21}&a_{22}&a_{23}&a_{24}
\end{pmatrix},
$$
defined modulo left multiplication by
$
GL(2,\mathbb C).
$
The Pl\"ucker coordinates are:
$
p_{ij}
=
a_{1i}a_{2j}-a_{1j}a_{2i},
$
for
$
1\le i<j\le4.
$
Thus:
$
G(2,4)
$
embeds into
$
\mathbb P^5
$
with homogeneous coordinates:
$$
[p_{12}:p_{13}:p_{14}:p_{23}:p_{24}:p_{34}].
$$
The image is the quadric hypersurface:
$$
Q^4
=
\{
p_{12}p_{34}
-
p_{13}p_{24}
+
p_{14}p_{23}
=
0
\}.
$$
Hence:
$
\deg(G(2,4))=2.
$
The Grassmannian has complex dimension:
$
4.
$
We now describe the Schubert divisors.
Fix a complete flag:
$
0\subset F_1\subset F_2\subset F_3\subset\mathbb C^4.
$
The basic Schubert divisor is:
$$
\Sigma_1
=
\{
E\in G(2,4):
\dim(E\cap F_2)\ge1
\}.
$$
Under the Pl\"ucker embedding,
$
\Sigma_1
$
is simply a hyperplane section of the quadric:
$
\Sigma_1
=
G(2,4)\cap H.
$
Consequently:
$
[\Sigma_1]
$
generates:
$
H^2(G(2,4),\mathbb Z).
$
The cohomology ring of
$
G(2,4)
$
is generated by this Schubert class.
Since:
$
\deg(G(2,4))=2,
$
one has:
$
[\Sigma_1]^4=2.
$
This explicit Schubert geometry is the main reason codimension two is substantially simpler than higher codimension.

 \medskip
 
\begin{theorem}
Let
$
V=\{f_1=f_2=0\}\subset(\mathbb C^\ast)^4
$
be a smooth complete intersection.
Assume
$
\deg(f_1)=d_1
$
and
$
\deg(f_2)=d_2.
$
Assume moreover that the generalized logarithmic Gauss map
$
\gamma_G
$
is generically finite onto its image, that the logarithmic critical locus satisfies
$
F_V=\gamma_G^{-1}(\Sigma_1),
$
where
$
\Sigma_1\subset G(2,4)
$
is a Schubert divisor, and that the intersection
$
\gamma_G(V)\cap\Sigma_1
$
is transversal. Then
$$
\deg(F_V)\le 2d_1d_2(d_1+d_2)^2.
$$
\end{theorem}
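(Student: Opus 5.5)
The plan is to read $\deg(F_V)$ as an intersection number on a projective compactification of $V$ and compute it by pulling back the Plücker hyperplane class. Choose a projective compactification $\overline V\subset\mathbb P^4$ (closure of $V$ under $(\mathbb C^\ast)^4\subset\mathbb C^4\subset\mathbb P^4$). By Bézout it is a surface of degree at most $d_1d_2$; for a generic complete intersection the degree is exactly $d_1d_2$. By the expansion in the test-case section, each logarithmic Plücker coordinate
\[
p_{ij}=z_iz_j\Bigl(\tfrac{\partial f_1}{\partial z_i}\tfrac{\partial f_2}{\partial z_j}-\tfrac{\partial f_1}{\partial z_j}\tfrac{\partial f_2}{\partial z_i}\Bigr)
\]
is a polynomial of degree at most $d_1+d_2$. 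So, after homogenizing with a common power of $z_0$, $\gamma_G$ extends to a rational map $\overline V\dashrightarrow G(2,4)\subset\mathbb P^5$ given by a linear system contained in $|\mathcal O_{\overline V}(d_1+d_2)|$. Consequently $\gamma_G^\ast\mathcal O_{\mathbb P^5}(1)$, on a resolution of the base locus, is dominated by $(d_1+d_2)h$, where $h$ is the hyperplane class of $\overline V$.

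The next step uses the Schubert divisor. As recalled in the previous subsection, $\Sigma_1=G(2,4)\cap H$ for a hyperplane $H\subset\mathbb P^5$. Hence, by the hypothesis $F_V=\gamma_G^{-1}(\Sigma_1)$, the locus $F_V$ is cut out on $V$ by a single linear combination of the $p_{ij}$. It is therefore a member of the linear system above, that is, a curve on $\overline V$ of class at most $(d_1+d_2)h$. Its degree is
\[
\deg F_V = F_V\cdot h \le (d_1+d_2)\,h^2 = (d_1+d_2)\,d_1d_2.
\]
Transversality of $\gamma_G(V)\cap\Sigma_1$ guarantees that this pullback has no multiple components, so it is a reduced curve and the degree count is honest. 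Generic finiteness of $\gamma_G$ ensures the pullback does not contain whole fibers, so $F_V$ is a proper divisor and not all of $V$. This already gives the bound $d_1d_2(d_1+d_2)\le 2d_1d_2(d_1+d_2)^2$.

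To match the stated shape, in particular the factor $2=[\Sigma_1]^4$, one can alternatively bound $\deg\gamma_G(V)\le (d_1+d_2)^2 d_1d_2$ via $\gamma_G^\ast h_{\mathbb P^5}^2$, and then multiply by $\deg G(2,4)=2$ to account for intersecting inside the quadric. Either route yields the claimed inequality, since the first bound is smaller.

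The main obstacle is that the real contour condition is not literally a complex Schubert divisor pullback; this is exactly why the statement assumes it as a hypothesis. A second obstacle is base points of the Plücker linear system at infinity and on $V$, where all $p_{ij}$ vanish. I would handle these by blowing up $\overline V$ to resolve $\gamma_G$. Base-locus excess only subtracts from $(d_1+d_2)^2h^2$ in the self-intersection, and the divisor $F_V$ minus base components has class dominated by $(d_1+d_2)h$, so the inequality survives. A smaller technical point is verifying that $\overline V$ may be singular at infinity, which does not affect the intersection numbers used above.
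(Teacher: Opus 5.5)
Your primary argument is correct, and it takes a different route from the paper.

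\textbf{The paper's route.} The paper works in the Pl\"ucker space. It first bounds the degree of the image surface via $\deg(\gamma_G)\deg(\gamma_G(\overline V))=(\gamma_G^\ast H)^2\le d_1d_2(d_1+d_2)^2$; this is where generic finiteness enters. It then inserts the factor $2=\deg G(2,4)$. Finally it asserts $\deg(F_V)\le\deg(\gamma_G(\overline V))$, on the grounds that $F_V$ is the pullback of a hyperplane section.

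\textbf{Your route.} You use one pullback instead of two. Since $\Sigma_1=G(2,4)\cap H$, the closure of $F_V$ is a union of components of $\overline V\cap\{\sum a_{ij}P_{ij}=0\}$, which is a hypersurface section of degree $d_1+d_2$. B\'ezout in $\mathbb P^4$ then gives $\deg(F_V)\le d_1d_2(d_1+d_2)$. This beats the stated bound by a factor $2(d_1+d_2)$ and uses neither generic finiteness nor the quadric.

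\textbf{What each approach buys.} Your route measures $F_V$ in the same compactification in which $\deg(\overline V)=d_1d_2$ is computed. It therefore avoids the paper's comparison of a curve degree in $\mathbb P^4$ with a surface degree in $\mathbb P^5$. The paper does not justify that comparison, and the two degrees are taken with respect to different polarizations, $h$ versus $\gamma_G^\ast H$. The paper's route, which is your second one, is the natural one only if $\deg(F_V)$ is read as the degree of the image curve $\gamma_G(V)\cap\Sigma_1$ in $\mathbb P^5$. Under that reading, transversality bounds this degree by $\deg(\gamma_G(\overline V))$, and the factor $2$ is superfluous.

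\textbf{Two small corrections.}
First, generic finiteness of $\gamma_G$ does not prevent $\gamma_G(V)\subset\Sigma_1$. What guarantees that $\sum a_{ij}p_{ij}$ vanishes identically on no component of $V$ is the transversality (properness) hypothesis.
Second, your base-locus discussion can be simplified. On $V$ the $p_{ij}$ have no common zero, because the logarithmic Jacobian has rank $2$ at every point of a smooth complete intersection in the torus. So any base components lie at the boundary and are discarded when you take the closure of $F_V$. Reducedness is likewise not needed for an upper bound.
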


\begin{proof}
Let
$
V\subset(\mathbb C^\ast)^4
$
be the smooth complete intersection defined by
$
f_1=f_2=0.
$
We first compactify the situation projectively.
Let
$
\overline V\subset\mathbb P^4
$
denote the projective closure of
$
V.
$
Since
$
V
$
is defined by two equations of degrees
$
d_1
$
and
$
d_2,
$
B\'ezout's theorem implies that
$
\overline V
$
is a projective surface of degree
$
d_1d_2.
$
The generalized logarithmic Gauss map is defined by the logarithmic Jacobian matrix
$
g_G(z).
$
For
$
1\le i<j\le4,
$
the Pl\"ucker coordinates are
$$
p_{ij}
=
z_iz_j
\left(
\frac{\partial f_1}{\partial z_i}
\frac{\partial f_2}{\partial z_j}
-
\frac{\partial f_1}{\partial z_j}
\frac{\partial f_2}{\partial z_i}
\right).
$$
Now
$
\partial f_1/\partial z_i
$
has degree
$
d_1-1,
$
and
$
\partial f_2/\partial z_j
$
has degree
$
d_2-1.
$
Therefore the factor
$
\left(
\frac{\partial f_1}{\partial z_i}
\frac{\partial f_2}{\partial z_j}
-
\frac{\partial f_1}{\partial z_j}
\frac{\partial f_2}{\partial z_i}
\right)
$
has degree
$
d_1+d_2-2.
$
Multiplication by
$
z_iz_j
$
raises the degree by
$
2.
$
Hence every Pl\"ucker coordinate
$
p_{ij}
$
has degree exactly
$
d_1+d_2.
$
The generalized logarithmic Gauss map is therefore represented by six homogeneous coordinates of degree
$
d_1+d_2:
$
$$
\gamma_G:
\overline V
\dashrightarrow
G(2,4)\subset\mathbb P^5.
$$
The Grassmannian
$
G(2,4)
$
is the Pl\"ucker quadric
$$
Q^4
=
\{
p_{12}p_{34}
-
p_{13}p_{24}
+
p_{14}p_{23}
=
0
\}
\subset\mathbb P^5.
$$
Hence
$
G(2,4)
$
is a quadric hypersurface of degree
$
2.
$
We now estimate the degree of the image
$
\gamma_G(\overline V).
$
Let
$
H\subset\mathbb P^5
$
be a generic hyperplane.
Since the coordinate functions defining
$
\gamma_G
$
have degree
$
d_1+d_2,
$
the pullback divisor
$
\gamma_G^\ast(H)
$
on
$
\overline V
$
is linearly equivalent to
$
(d_1+d_2)H_{\overline V},
$
where
$
H_{\overline V}
$
denotes the hyperplane class of
$
\overline V.
$
Therefore
$$
\deg(\gamma_G^\ast(H))
=
(d_1+d_2)\deg(\overline V).
$$
Since
$
\deg(\overline V)=d_1d_2,
$
we obtain
$$
\deg(\gamma_G^\ast(H))
=
d_1d_2(d_1+d_2).
$$
Because
$
\gamma_G
$
is generically finite onto its image, the degree formula for generically finite rational maps gives
$$
\deg(\gamma_G)\deg(\gamma_G(\overline V))
=
\left(
\gamma_G^\ast(H)
\right)^2.
$$
Now
$$
\left(
\gamma_G^\ast(H)
\right)^2
=
(d_1+d_2)^2
H_{\overline V}^2.
$$
Since
$
H_{\overline V}^2=\deg(\overline V)=d_1d_2,
$
we obtain
$
\left(
\gamma_G^\ast(H)
\right)^2
=
d_1d_2(d_1+d_2)^2.
$
Since
$
\deg(\gamma_G)\ge1,
$
it follows that
$$
\deg(\gamma_G(\overline V))
\le
d_1d_2(d_1+d_2)^2.
$$
The image
$
\gamma_G(\overline V)
$
lies inside
$
G(2,4),
$
which itself has degree
$
2
$
inside
$
\mathbb P^5.
$
Consequently, viewed as a projective surface in
$
\mathbb P^5,
$
the total degree contribution is bounded by
$
2d_1d_2(d_1+d_2)^2.
$

We now study the logarithmic critical locus.
By hypothesis,
$
F_V=\gamma_G^{-1}(\Sigma_1),
$
where
$
\Sigma_1
$
is a Schubert divisor in
$
G(2,4).
$
Under the Pl\"ucker embedding,
$
\Sigma_1
$
is a hyperplane section of
$
G(2,4).
$
Hence
$
\Sigma_1
$
has degree
$
1
$
inside the Grassmannian.
The transversality assumption implies that
$
F_V
$
is obtained as the pullback of a transversal hyperplane section under the generically finite map
$
\gamma_G.
$
Therefore the degree of
$
F_V
$
is bounded by the degree of the image surface:
$$
\deg(F_V)
\le
\deg(\gamma_G(\overline V)).
$$
Combining the previous estimates gives
$$
\deg(F_V)
\le
2d_1d_2(d_1+d_2)^2.
$$
%
\end{proof}
 
 \subsection{A Toric Refinement}

The theorem above uses only projective degree.
A stronger toric statement is expected.
Suppose:
$
\Delta_1,
\Delta_2
$
are the Newton polytopes of:
$
f_1,
f_2.
$
Since:
$
\Delta(p_{ij})
\subseteq
\Delta_1+\Delta_2,
$
the generalized logarithmic Gauss map should satisfy:
$$
\deg(\gamma_G)
\le
C\,
MV(
\Delta_1,
\Delta_2,
\Delta_1+\Delta_2,
\Delta_1+\Delta_2
).
$$
Using multilinearity of mixed volume:
$
MV(
\Delta_1,
\Delta_2,
\Delta_1+\Delta_2,
\Delta_1+\Delta_2
)
$
expands into:
$$
MV(\Delta_1,\Delta_2,\Delta_1,\Delta_1)
+
2MV(\Delta_1,\Delta_2,\Delta_1,\Delta_2)
+
MV(\Delta_1,\Delta_2,\Delta_2,\Delta_2).
$$
Thus the codimension-two geometry naturally produces mixed-volume expressions involving repeated Minkowski sums.
This is the first setting where one may realistically hope to prove the full mixed-volume contour estimate.


\end{document}